\theoremstyle{plain}
\newtheorem{theorem}{Theorem}[section]
\newtheorem{lemma}[theorem]{Lemma}
\newtheorem{problem}[theorem]{Problem}
\newtheorem{assumption}[theorem]{Assumption}
\theoremstyle{definition}
\newtheorem{definition}[theorem]{Definition}
\newtheorem{remark}[theorem]{Remark}
\numberwithin{equation}{section}
\newcommand{\linspan}{\mathop{\rm span}\nolimits}
\newcommand{\rest}{\left.\kern-2\nulldelimiterspace\right|_}
\newcommand{\norm}[2]{\left|#1\right|_{#2}}
\newcommand{\Id}{{\mathbf1}}
\newcommand{\indf}{1}
\newcommand{\ex}{\mathrm{e}}
\newcommand{\p}{\partial}
\newcommand*{\Bigcdot}{\raisebox{-.25ex}{\scalebox{1.25}{$\cdot$}}}
\newcommand{\clE}{{\mathcal E}}
\newcommand{\clH}{{\mathcal H}}
\newcommand{\clI}{{\mathcal I}}
\newcommand{\clJ}{{\mathcal J}}
\newcommand{\clL}{{\mathcal L}}
\newcommand{\clT}{{\mathcal T}}
\newcommand{\clU}{{\mathcal U}}
\newcommand{\clV}{{\mathcal V}}
\newcommand{\clW}{{\mathcal W}}
\newcommand{\clX}{{\mathcal X}}
\newcommand{\clZ}{{\mathcal Z}}
\newcommand{\bbN}{{\mathbb N}}
\newcommand{\bbR}{{\mathbb R}}
\newcommand{\bfZ}{{\mathbf Z}}
\newcommand{\fkA}{{\mathfrak A}}
\newcommand{\fkB}{{\mathfrak B}}
\newcommand{\fkC}{{\mathfrak C}}
\newcommand{\fkI}{{\mathfrak I}}
\newcommand{\fkJ}{{\mathfrak J}}
\newcommand{\fkK}{{\mathfrak K}}
\newcommand{\fkL}{{\mathfrak L}}
\newcommand{\fkM}{{\mathfrak M}}
\newcommand{\fkT}{{\mathfrak T}}
\newcommand{\fkU}{{\mathfrak U}}
\newcommand{\fkW}{{\mathfrak W}}
\newcommand{\rmD}{{\mathrm D}}
\newcommand{\bfn}{{\mathbf n}}
\newcommand{\bfz}{{\mathbf z}}
\newcommand{\rmd}{{\mathrm d}}
\newcommand{\rme}{{\mathrm e}}
\newcommand{\rmf}{{\mathrm f}}
\newcommand{\rmo}{{\mathrm o}}
\newcommand{\ovlineC}[1]{\overline C_{\left[#1\right]}}
\definecolor{DarkBlue}{rgb}{0,0.08,0.45}
\definecolor{DarkRed}{rgb}{.65,0,0}
\definecolor{applegreen}{rgb}{0.55, 0.71, 0.0}
\newcounter{mymac@matlab}
\newcommand{\matlab}{MATLAB%
   \ifnum\value{mymac@matlab}<1%
   \textregistered%
   \setcounter{mymac@matlab}{1}%
   \fi%
  }
\providecommand{\argmin}{\operatorname*{argmin}}
\begin{document}
\title{Output-based receding horizon stabilizing control}
\author{Behzad Azmi$^{\tt1}$}
\author{S\'ergio S.~Rodrigues$^{\tt2*}$}
\thanks{
\vspace{-1em}\newline\noindent
{\sc MSC2020}: 93C05, 93C20, 93D20, 35Q93, 49M05, 93B45
\newline\noindent
{\sc Keywords}: exponential stabilization, parabolic equations, receding-horizon control,  projections based explicit exponential observer, finite-dimensional input and output,
\newline\noindent
$^{\tt1}$ Dep.  Math. Stat., Univ.
Konstanz, Universitätstr. 10, D-78457 Konstanz, Germany.
  \newline\noindent
$^{\tt2}$ Johann Radon Inst. Comput. Appl. Math.,
  \"OAW, Altenbergerstr. 69, 4040 Linz, Austria.
\quad
\newline\noindent
{\sc Emails}:
{\small\tt behzad.azmi@uni-konstanz.de,\quad sergio.rodrigues@ricam.oeaw.ac.at}%
}

\begin{abstract}
A receding horizon control framework is coupled with a Luenberger observer to construct an output-based control input stabilizing parabolic equations. The actuators and sensors are indicator functions of small subdomains, representing localized actuation and localized measurements. It is shown that, for a class of explicitly given sets of actuators and sensors, we can guarantee the stabilizing property of the constructed input.  Results of numerical simulations are presented validating the theoretical findings.
\end{abstract}

\maketitle

\pagestyle{myheadings} \thispagestyle{plain} \markboth{\sc  B. Azmi and S. S.
Rodrigues}{\sc output-based receding horizon control}

\section{Introduction}
We consider evolutionary linear parabolic-like equations for time ~$t\ge0$ given by
 \begin{align}\label{sys-y-intro0} 
 \dot y +Ay+A_{\rm rc}y =Bu,\qquad y(0)= y_0,\qquad w=\clZ y,
\end{align} 
where  the state ~$y(t)$ evolves in a real Hilbert space,  influenced by a finite-dimensional control input ~$u(t)\in\bbR^M$.  This input is used to tune $M$ available actuators.

Our primary interest is in scenarios where the free dynamics (i.e., when 
~$u(t)\in\bbR^M$) may be unstable,  and we aim to stabilize them.  Additionally, we seek to minimize the energy spent during the stabilization process.

At time~$t\ge0$,  we have only access to partial information on the state~$y(t)$ through the output~$w(t)=\clZ y(t)\in\bbR^S$,  which is provided by a finite number ~$S$ of sensors.  Notably,  the initial state ~$y_0\coloneqq y(0)$ at initial time~$t=0$ is not completely known.

Our goal is to determine the control input in the feedback form ~$u(t)=K(t,y(t))$,  which depends on  time~$t$ and state~$y(t)$ at time~$t$.  However, since ~$y(t)$ is not fully known,  we will instead seek an output-based feedback control input~$u(t)=K(t,\widehat y(t))$,  where~$\widehat y(t)$ is an estimate for~$y(t)$.  This estimate will be obtained by an exponential Luenberger observer,  constructed using the available output ~$w(t)$.

\subsection{Main contributions}
The main result of this manuscript concerns the stability of the closed-loop system that couples a receding horizon control (RHC) with a dynamical Luenberger observer. This observer provides an estimate for the state of the system, based on sensor measurements. Despite the crucial importance of such closed-loop coupled systems for various applications,  we could not find such stability results in the RHC-related literature, neither in the context of models governed by parabolic equations,  as addressed in this manuscript,  nor in the more general context of {\em continuous-time} dynamical systems. 

The theoretical findings are supported by numerical experiments, demonstrating the stabilizing performance of the proposed output-based receding horizon control (ORHC) framework.

\subsection{Some related literature}
RHC,  also known as model predictive control (MPC), is an efficient framework for addressing infinite horizon optimal control problems,  see~\cite{GruenePannek17,RawlingsMayneDiehl19}, for example.   In this framework, the solution to an infinite horizon problem is approximated by a sequence of finite horizon problems in a receding horizon manner. Ensuring stabilizability and optimality with finite prediction horizons remains a significant challenge.

For autonomous linear unconstrained dynamics, a stabilizing control can be obtained by solving a suitable algebraic Riccati equation; see~\cite[Part~III, Sect.~1.3; Part~IV, Sect.~4.4]{Zabczyk08} and~\cite[Sect.~7.1.4]{Bacciotti19}.  In the case of time-periodic dynamics, the solution can be found by solving a time-periodic differential Riccati equation see~\cite{Lunardi91,Rod23-eect}. However, for general nonautonomous dynamics, solving the associated differential Riccati equations is not feasible (cf.~\cite[Sect.~2.3]{Rod23-eect}). In such cases, a RHC framework can still be applied, as we will demonstrate in this manuscript.

RHC has gained popularity over the past few decades due to its flexibility in handling constraints, nonautonomous dynamics, and nonlinear dynamics. This approach has been widely applied in both continuous-time dynamical systems \cite{Longchamp83,MayneMichalska90,LiYanShi17,KwonPearson77,KunPfeiffer20,AzmiKunisch19,VeldZua22,VeldBorZua24} and discrete-time dynamical systems \cite{MayneRawlRaoScok00,GruenePannek09,GrueneRantzer08,FranzeLucia15,GarciaPrettMorari89}. Regarding the integration of RHC with state estimators, \cite{MayneRavicFindAllg09} explored the use of a Luenberger state estimator with RHC for {\em  finite-dimensional discrete-time} controlled systems and the stabilizability of ORHC was ensured by using a robustly stabilizing tube-based MPC, involving terminal costs and constraints, to control the state of the observer.  Similarly, \cite{ItoKuni06} combined RHC with a linear quadratic Gaussian state estimator for finite-dimensional continuous-time dynamical systems,  guaranteeing stabilizability through using terminal costs in the finite time-horizon problems.

In the present work, we investigate  {\em continuous-time infinite-dimensional} controlled systems governed by parabolic equations. Our  ORHC framework does not require terminal costs or constraints; instead,  stabilizability is achieved through overlapping intervals and a suitable concatenation scheme.

\subsection{Contents}
The rest of the paper is organized as follows: In Section~\ref{S:orhc},  the abstract problem formulation, the cost functional, and the ORHC  algorithm are presented.  Section~\ref{S:stabilizability} first introduces the required assumptions on the operators~$A$, $A_{\rm rc}$, $B$, and~$\clZ$.  Then, relying on these assumptions and on a further assumption on the cost functional, the main result on  the exponential stabilizability of ORHC is proved.  Further, in Section~\ref{S:illust-parabolic},  the applicability of the theoretical results from the previous section is investigated for a class of nonautonomous linear parabolic equations with finitely many indicator function-based actuators and sensors. Section~\ref{S:simul} reports on numerical simulations validating our theoretical findings. Finally, Section~\ref{S:finalremks} gathers concluding remarks on the achieved results and discusses interesting subjects for potential follow-up studies.

\section{Output based receding horizon control}\label{S:orhc}
We present the details concerning the output based receding-horizon control and the strategy to prove the theoretical result.

\subsection{Stabilization while minimizing the spent energy}
We consider evolutionary linear parabolic-like equations, for time~$t\ge0$, with free dynamics given by
 \begin{align}\label{sys-y-intro} 
 \dot y +Ay+A_{\rm rc}y =0,\qquad y(0)= y_0,
\end{align} 
with state~$y(t)\in H$ evolving in a real Hilbert pivot space~$H=H'$. Here~$A\in\clL(V,V')$ and~$A_{\rm rc}=A_{\rm rc}(t)\in\clL(V,H)+\clL(H,V')$ are, respectively,
a time-independent linear diffusion-like operator and a
time-dependent linear reaction--convection-like operator. Above, $V$ is another Hilbert space with continuous and dense inclusion~$V\subseteq H$. 

When~\eqref{sys-y-intro} is unstable we need to use a control in order to achieve the desired stability.  For a given set
\begin{equation}\label{UM}
U_{M}\coloneqq\{\Phi_j\mid 1\le j\le M\}\subset \fkU
\end{equation}
of~$M$ actuators~$\Phi_j$ in a suitable vector space~$\fkU$, we want to find a control input~$u(t)=(u_1(t),\dots,u_{M}(t))\in\bbR^{M}$
such that the state~$y(t)$ of the system
\begin{align}\label{sys-y-intro-K} 
 \dot y +Ay+A_{\rm rc}y =B u,\quad\mbox{where}\quad Bu(t)\coloneqq U_M^\diamond u(t)\coloneqq\sum_{j=1}^{M}u_j(t)\Phi_j,\qquad y(0)= y_0,
\end{align}
converges exponentially to zero as time diverges to~$\infty$.
In other words, we want that for suitable  real numbers $\varrho\ge1$ and~$\mu>0$, 
the solution of~\eqref{sys-y-intro-K} satisfies
\begin{align}\label{goal-intro}
 \norm{y(t)}{H}\le \varrho\ex^{-\mu t}\norm{y_0}{H},\mbox{ for all } y_0\in H,\; t\ge0.
\end{align}

Furthermore, we would like to find the input such that the pair~$(y,u)$, subject to~\eqref{sys-y-intro-K}, minimizes the functional
\begin{align}\label{cost-intro}
\clJ_\infty(y,u;0,y_0)\coloneqq \frac12\int_0^{\infty}\norm{Qy(t)}{H}^2+\norm{u(t)}{\bbR^{M}}^2\,\rmd t,
\end{align}
representing the energy spent during the stabilization proccess, for an appropriate state-penalization operator~$Q\colon H\mapsto H_Q$, where~$H_Q$ is another Hilbert space.

\subsection{Observer design}
Due to robust properties of feedback inputs, we aim at finding the input as a function of the state, $u(t)\coloneqq K(t,y(t))$. When the initial state~$y_0\coloneqq y(0)\in H$, at time~$t=0$, is not known in its entirety, which is the case in many real world applications, in particular, when the state is {\em infinite}-dimensional as for models governed by parabolic partial differential equations, we will need an estimate for~$y_0\coloneqq y(0)\in H$ as well as for the state~$y(t)$ at future instants of time.  To construct such an estimate~$\widehat y(t)$, we use  the measurements provided by a  {\em finite} set 
\begin{equation}\label{WS}
W_S=\{\Psi_j\mid 1\le j\le S\}\subset \fkW
\end{equation}
of~$S$ sensors~$\Psi_j$, in a suitable vector space~$\fkW$.

\subsection{Output based receding-horizon control}
In our setting, under a stabilizability assumption, it is possible to find a feedback relation~$u(t)=K(t,y(t))$ for the minimizer of  the infinite time-horizon (ITH) cost functional~$\clJ_\infty$ in~\eqref{cost-intro}, where~$K=-(U_M^\diamond)^*\Pi(t)$ with~$\Pi$ solving a differential Riccati equation. In the particular case of autonomous and time-periodic dynamics it is also possible to compute an approximation of~$\Pi$ (for a moderate number of degrees of freedom of the numerical spatial approximation of the dymamics). However,  for general nonautonomous systems, it is not possible to compute~$\Pi$ (since we would need to ``solve'' the  differential Riccati equation  backwards in time), and it is not trivial (if possible) to define/quantify what  could be a good approximation of~$\Pi$.

In order to deal with general nonautonomous systems, we look for an ``approximation'' of the optimal infinite-horizon pair~$(y,u)$ minimizing~\eqref{cost-intro}, by following a {\em full-state based} receding-horizon control (RHC) framework, solving a sequence of finite time-horizon (FTH) optimal control problems associated with cost functional versions of~\eqref{cost-intro}, defined in bounded time intervals
\begin{equation}\label{It0T}
\clI_{ {t_{\rm in}} ,T}\coloneqq( {t_{\rm in}} , {t_{\rm in}} +T),\qquad  {t_{\rm in}} \in\{t_n\mid n\in\bbN\},
\end{equation}
for a sequence of concatenation time instants satisfying
\begin{equation}\notag
0=t_0\le t_n<  t_{n+1}\quad\mbox{and}\quad\tau\le t_{n+1}- t_n\le T,
\end{equation}
for an a-priori fixed minimal sampling time~$\tau>0$ and prediction horizon~$T\ge\tau$, namely,
\begin{align}\label{cost-T}
 \clJ_{T}(y,u; {t_{\rm in}} , {y_{\rm in}} )\coloneqq\frac12\int_{ {t_{\rm in}} }^{ {t_{\rm in}} +T}\norm{Qy(t)}{H}^2 +\norm{u(t)}{\bbR^{M}}^2\,\rmd t,
\end{align}
with~$(y,u)$ subject to the dynamics in~\eqref{sys-y-intro-K} for~$t\in \clI_{ {t_{\rm in}} ,T}$,   and  the initial state~$ {y_{\rm in}} =y( {t_{\rm in}} )$.

Within this stabilization approach, the control input is computed/constructed in the time intervals~$\clI_{t_n,T}$ and used in the subintervals~$(t_n,t_{n+1})$, with~$t_{n+1}\ge t_n+\tau$. Note that, to compute the input~$u(t)=u(y( {t_{\rm in}} );t)$ minimizing~\eqref{cost-T}, we need to know the state~$y( {t_{\rm in}} )$ at initial time~$t= {t_{\rm in}} $, which is (assumed to be) not entirely available.

Thus, in practice, we will need an estimate~$\widehat y(t_n)$ for~$y(t_n)$. Once we have such an estimate we can use the input~$u(t)=u(\widehat y(t_n);t)$ instead, for~$t\in (t_n,t_{n+1})$.
To construct such an estimate we will design a dynamical Luenberger observer, based on the output
$w(t)\in\bbR^{S}$ of measurements performed by the sensors~$\Psi_j$. Therefore, we will follow an {\em output based} receding-horizon control (ORHC) framework.

\subsection{The algorithm}
By introducing the Hilbert space
\begin{subequations}\label{spaceXn}
\begin{align}
 &\clX_{ {t_{\rm in}} ,T}\coloneqq  W(\clI_{ {t_{\rm in}} ,T},V,V')\times L^2(\clI_{ {t_{\rm in}} ,T},\bbR^{M})
 \intertext{and, for each~$ {y_{\rm in}} \in H$, its subset}
 & \clX_{ {t_{\rm in}} ,T; {y_{\rm in}} }\coloneqq\{(w,v)\in \clX_{ {t_{\rm in}} ,T}\mid \dot w +Aw+A_{\rm rc}w =U_M^\diamond v\quad\mbox{and}\quad w(0)= {y_{\rm in}} \},
\end{align}
\end{subequations}
we consider the minimization problem as follows.
\begin{problem}\label{P:finT}
Find~$(y^*,u^*)\in\argmin\limits_{(w,v)\in\clX_{ {t_{\rm in}} ,T; {y_{\rm in}} }}  \clJ_{T}(w,v; {t_{\rm in}} , {y_{\rm in}} )$.
\end{problem}
We can show that the minimizer of Problem~\ref{P:finT} is unique, which we denote by
\begin{equation}\label{minT}
(y^*,u^*)=(y_T^*,u_T^*)(t;  {t_{\rm in}} ,  {y_{\rm in}} )=(y_T^*(t;  {t_{\rm in}} ,  {y_{\rm in}} ),u_T^*(t;  {t_{\rm in}} ,  {y_{\rm in}} )),\qquad t\in \clI_{ {t_{\rm in}} ,T}.
\end{equation}

The ORHC associated state-input pair~$(y_{\rm orh},u_{\rm orh})$ will satisfy
\begin{align}\label{systRHCtildeu}
& \dot y_{\rm orh} +Ay_{\rm orh}+A_{\rm rc}y_{\rm orh} =U_M^\diamond u_{\rm orh},\qquad w_{\rm orh}=\clZ  y_{\rm orh},
\end{align}
with unknown initial state~$y_{\rm orh}(0)=y_0$ and 
\begin{align}\notag
&(y_{\rm orh},u_{\rm orh})\rest{(t_n,t_{n+1})}=(y_T^*,u_T^*)(t; t_n, \widehat y(t_n))\rest{(t_n,t_{n+1})},\qquad n\in\bbN,
\end{align}
where~$\widehat y(t_n)$ is an estimate for the state~$y_{\rm orh}(t_n)$ at time~$t=t_n$, provided by a full-dimensional dynamical Luenberger exponential observer based on the output 
\begin{equation}\label{wZy}
w_{\rm orh}(t)=\clZ  y_{\rm orh}(t)\coloneqq(\Psi_1 y_{\rm orh}(t),\dots,\Psi_{S} y_{\rm orh}(t))
\end{equation}
of sensor measurements~$\Psi_j y_{\rm orh}(t)$. The observer takes the form
\begin{align}\label{sys-haty} 
 \dot {\widehat y} +A\widehat y+A_{\rm rc}\widehat y =U_M^\diamond u_{\rm orh}+\fkI_{S}(\clZ   \widehat y-w_{\rm orh}),\qquad
 \widehat y(0)  =\widehat y_0,
\end{align}
where~$\widehat y_0$ is an initial guess/estimate we might have for the unknown initial state~$y_0$, and the output injection operator
\[
\fkI_{S}=\fkI_{S}(t)\colon \bbR^{S}\to H
\]
represents the way the measured output is used/injected into the dynamics of the observer.
The choice of~$\fkI_{S}$ is at our disposal and we want to choose it so that for all~$(\widehat y_0,y_0)\in H\times H$,
\begin{align}\label{goal-obs}
 \norm{\widehat y(t)-y_{\rm orh}(t)}{H}\le C_1\ex^{-\mu_1 t}\norm{\widehat y_0-y_0}{H},\quad t\ge0.
\end{align}

The ORHC steps are illustrated in Algorithm~\ref{alg:rhc+obs}.
\begin{algorithm}[ht]
 \caption{Output based receding horizon control -- ORHC$(T,\tau;\clZ,\widehat y_0)$}
\begin{algorithmic}[1]\label{alg:rhc+obs}
\REQUIRE{Operators~$(A,A_{\rm rc})$ from model; set of actuators~$U_M$;  output operator~$\clZ$;\\ output injection operator~$\fkI_S$; state penalization operator~$Q$ from energy functional;\\ receding-horizon parameters~$T\ge\tau>0$; guess~$\widehat y_0$ for initial state; $T_\infty\in\bbR_+\cup\{\infty\}$;} 
\ENSURE{input~$u_{\rm orh}$; nondecreasing sequence~$(t_n)_{n\in\bbN}$.}
\STATE Set $ {t_{\rm in}} =0$, $ {y_{\rm in}} =\widehat y_0$;
\STATE $t_0\gets 0$; $n\gets 0$;
\WHILE{$ {t_{\rm in}} <T_\infty$}
\STATE\label{alg:st:comp-oc} Compute~$(y_T^*,u_T^*)=(y_T^*,u_T^*)(t;  {t_{\rm in}} ,  {y_{\rm in}} )$, for~$t\in[ {t_{\rm in}} , {t_{\rm in}} +T]$;
\STATE $n\gets n+1$;
\STATE\label{alg:st:tn} $t_n\gets\max\argmin\limits_{t\in[ {t_{\rm in}} +\tau, {t_{\rm in}} +T]} \norm{y_T^*(t;  {t_{\rm in}} ,  {y_{\rm in}} )}{H}$;
\STATE $u_{\rm orh}\rest{[ {t_{\rm in}} ,t_n]}\gets u_T^*(\Bigcdot;  {t_{\rm in}} ,  {y_{\rm in}} )\rest{[ {t_{\rm in}} ,t_n]}$;
\STATE\label{alg:st:realsyst} Let the controlled system~\eqref{systRHCtildeu} evolve for time~$t\in[ {t_{\rm in}} ,t_n]$ with the input~$u_{\rm orh}$, and store the output~$\clZ y_{\rm orh}$ of the sensor measurements;
\STATE\label{alg:st:obssyst} Use the stored output to solve~\eqref{sys-haty} for the estimate~$\widehat y$, for time~$t\in[ {t_{\rm in}} ,t_n]$;
\STATE  $ {t_{\rm in}} \gets t_n$;\quad $ {y_{\rm in}} \gets\widehat y(t_n)$;
 \ENDWHILE
 \end{algorithmic}
\end{algorithm}

\subsection{Observations}
We make the following comments on the steps within Algorithm~\ref{alg:rhc+obs}.
 \begin{enumerate}[noitemsep,topsep=5pt,parsep=5pt,partopsep=15pt,leftmargin=0em]
 \renewcommand{\theenumi}{\underline{\bf O.{\roman{enumi}}}\;} 
 \renewcommand{\labelenumi}{}
\item\theenumi\label{rem:alg:delta} The choice of the sampling time~$\tau_n\coloneqq t_{n}-t_{n-1}\ge\tau$ is made online in Algorithm~\ref{alg:rhc+obs}. Often in the literature, the sampling time is taken simply as~$\tau_n=\tau$, thus with concatenation time instants taken as multiples~$t_n=n\tau$; see~\cite[Intro.~, Alg.~1.1]{AzmiKunisch19}\cite[Sect.~2.4, Alg.~1]{KunPfeiffer20}. The varying online choice made within the algorithm,  is motivated by the need to guarantee a suitable squeezing property at time~$t_n$, $n\ge1$, for the norm of the optimal state~$y_T^*$; see Lemma~\ref{L:optTsqueez}. In this manuscript, we do not investigate the possibility of taking simply~$\tau_n=\tau$;  a proof of the stabilizing property of the input in this setting will require different arguments, because for small~$\tau$ we will not necessarily have the mentioned squeezing property, no matter how large we take~$T$. This can be concluded from the facts that the norm of the optimal state of the ITH problem is not necessarily decreasing and that the optimal ITH state itself is a limit of a sequence of optimal FTH states~\cite{Rod23-scl}.
\item\theenumi\label{rem:alg:st:comp-oc} In step~\ref{alg:st:comp-oc} of Algorithm~\ref{alg:rhc+obs} we use only the available state estimate~$\widehat y(t_{\rm in})$ at initial concatenation time~$ {t_{\rm in}} $. Thus, the computation can be indeed performed in applications where the state~$y(t_{\rm in})$ is not fully available.
\item\theenumi\, In  step~\ref{alg:st:tn} of Algorithm~\ref{alg:rhc+obs}:
\begin{itemize}
\item the choice~$t_n\coloneqq\max\argmin_{t\in[ {t_{\rm in}} +\tau, {t_{\rm in}} +T]} \norm{y_T^*(t;  {t_{\rm in}} ,  {y_{\rm in}} )}{H}$ is made in order to use the computed input for a maximal time-length, this can be seen as an attempt at reducing the number of computed finite-horizon optimal control problems,  thus,  in order to reduce the computational effort;
\item in general, $t_n$ will be strictly smaller that~$ {t_{\rm in}} +T=t_{n-1}+T$, in particular for unstable dynamics and due to the lack of final time penalization in~\eqref{cost-T}, which implies that the optimal input vanishes at time~$ {t_{\rm in}} +T$;
\item we can also choose~$t_n=\min\argmin_{t\in[ {t_{\rm in}} +\tau, {t_{\rm in}} +T]} \norm{y_T^*(t;  {t_{\rm in}} ,  {y_{\rm in}} )}{H}$, for an apriori fixed $\tau\in(0,T]$; this can be seen as an attempt to increase the stabilization exponential rate. 
\end{itemize}
\item\theenumi\, In step~\ref{alg:st:realsyst} of Algorithm~\ref{alg:rhc+obs}, in practice we have just to store the  output~$\clZ y_{\rm orh}$  of measurements provided by the sensors. That is, the system/evolution process is evolving in its plant and we have only access to the output, for example, from measurements of the temperature in a room, provided by sensors/thermostats located in that room; the output is then send to a device/machine which performs the algorithm steps and sends the input to the actuators/radiators located in the room. The computation on this input can be performed remotely, that is, the device/machine can be located in another room/building. 
\end{enumerate}

\subsection{Theoretical strategy}\label{sS:sepprinciple.strat}

Gathering~\eqref{sys-haty} and~\eqref{systRHCtildeu} we have the coupled system
\begin{subequations}\label{sys-coupled} 
\begin{align}
 & \dot y_{\rm orh} +Ay_{\rm orh}+A_{\rm rc}y_{\rm orh} =U_M^\diamond u_{\rm orh},\qquad&& w_{\rm orh}=\clZ  y_{\rm orh},\\
  &\dot {\widehat y} +A\widehat y+A_{\rm rc}\widehat y =U_M^\diamond u_{\rm orh}+\fkI_{S}(\clZ   \widehat y-w_{\rm orh}),\qquad&&
 \widehat y(0)  \!=\!\widehat y_0,\label{sys-coupled-haty} 
\end{align}
\end{subequations}
where the initial state~$y_{\rm orh}(0)$ is unknown. To prove the aimed stabilizing property of the ORHC input~$u_{\rm orh}$ obtained by Algorithm~\ref{alg:rhc+obs}, we shall follow a standard argument using, in particular, the fact that the dynamics of the estimate error~$z\coloneqq \widehat y-y_{\rm orh}$ decouples from the dynamics of the controlled state (separation principle). Indeed, for~$(z,y_{\rm orh})\coloneqq(\widehat y-y_{\rm orh},y_{\rm orh})$, from~\eqref{sys-coupled}, we obtain
\begin{subequations}\label{sys-coupled-error}
\begin{align}
 \dot y_{\rm orh} +Ay_{\rm orh}+A_{\rm rc}y_{\rm orh} &=U_M^\diamond u_{\rm orh},&&\quad  y_{\rm orh}(0)= y_0;\label{sys-coupled-error-y}\\
  \dot z +Az+A_{\rm rc}z &=\fkI_{S}\clZ    z,\label{sys-coupled-error-z}
 &&\quad z(0)=z_0;
\end{align}
\end{subequations}
where~$y_0$ and~$z_0\coloneqq\widehat y_0-y_0$ are unknown and we can see that the dynamics of the state estimate error~$z$ is independent of the controlled state~$y_{\rm orh}$.

The stabilizability result  shall be derived firstly, in Section~\ref{S:stabilizability}, under some key general/abstract assumptions. Then, the satisfiability of the assumptions is shown in Section~\ref{S:illust-parabolic}, in the context of concrete parabolic equations.

\section{Stabilizing property of the ORHC input}\label{S:stabilizability}
Let~$H=H'$ be a real separable pivot Hilbert space and let~$V\subset H$ be another real separable Hilbert space.

Hereafter, we write~$\bbR$ and~$\bbN$ for the sets of real numbers and nonnegative
integers, respectively, and  their subsets of positive numbers~$\bbR_+\coloneqq(0,+\infty)$
and $\bbN_+\coloneqq\mathbb N\setminus\{0\}$.

\subsection{Stabilizability and detectability}
Let us assume that the solutions of the following nonautonomous system, with~$\fkA(t)\in\clL(V,V')$
\begin{equation}\label{sys.fkL}
\dot y=\fkA y,\qquad y(0)=y_0\in H,
\end{equation}
satisfy~$y\in L^2((0,T),V)$ and~$\dot y\in L^2((0,T),V')$, for all~$T>0$, and~$y\in C(\overline\bbR_+,H)$.

\begin{definition}
The operator~$\fkA\in L^\infty(\bbR_+,\clL(V,V'))$ is said $(\varrho,\mu)$-exponentially stable, with constants~$\varrho\ge1$ and~$\mu>0$, if every weak solution of~\eqref{sys.fkL}
satisfies~\eqref{goal-intro}.
\end{definition}

Let $\clH_{1}$ and~$\clH_{2}$ be two additional Hilbert spaces.

\begin{definition}\label{D:stab-pair}
The pair~$(\fkA,\fkB)\in L^\infty(\bbR_+,\clL(V,V'))\times L^\infty(\bbR_+,\clL(\clH_1,H))$ is said $(\varrho,\mu)$-stabilizable, if there is~$\fkK\in L^\infty(\bbR_+, \clL(V,\clH_1))$ so that~$\fkA+\fkB \fkK$
is $(\varrho,\mu)$-exponentially stable.  The pair~$(\fkA,\fkB)$ is said stabilizable,  if it is $(\varrho,\mu)$-stabilizable for some~$\varrho\ge1$ and~$\mu>0$.
\end{definition}

\begin{definition}\label{D:detect-pair}
The pair~$(\fkA, \fkC)\in L^\infty(\bbR_+,\clL(V,V'))\times L^\infty(\bbR_+,\clL(V,\clH_2))$ is said $(\varrho,\mu)$-detectable, if
there is~$\fkL\in  L^\infty(\bbR_+,\clL(\clH_2,H))$ so that~$\fkA+\fkL\fkC$
is $(\varrho,\mu)$-exponentially stable. The pair~$(\fkA, \fkC)$ is said detectable, if it is $(\varrho,\mu)$-detectable for some~$\varrho\ge1$ and~$\mu>0$.
\end{definition}

\subsection{Assumptions}
We gather general assumptions on each of the involved operators, namely, on~$A$ and~$A_{\rm rc}$ defining the free dynamics, and on the additional operators~$U_M^\diamond$, $\clZ$, and~$Q$, used in the controlled dynamics.

For the operators involved in the free dynamics we require the following.
\begin{assumption}\label{A:A}
The inclusion $V\subseteq H$ is dense, continuous, and compact. the operator
$A\in\clL(V,V')$ is symmetric and $(y,z)\mapsto\langle Ay,z\rangle_{V',V}$ is a complete scalar product in~$V.$ 
\end{assumption}

\begin{assumption}\label{A:A1}
For almost every~$t>0$, $A_{\rm rc}(t)\in\clL(H,V')+\clL(V, H)$ 
and we have a uniform bound as $\norm{A_{\rm rc}}{L^\infty(\bbR_+,\clL(H,V')+\clL(V, H))}\eqqcolon C_{\rm rc}<+\infty.$
\end{assumption}

For the set of sensors we require the following, in terms of the output operator~$\clZ$.
\begin{assumption}\label{A:sens}
The pair~$(-Az-A_{\rm rc},\clZ)$ is detectable.
\end{assumption}

For the set of actuators and cost functional we require the following, in terms of the control operator~$U_M^\diamond$ and state-penalization operator~$Q$.
\begin{assumption}\label{A:act}
The pair~$(-Az-A_{\rm rc},U_M^\diamond)$ is stabilizable and the pair~$(-Az-A_{\rm rc},Q)$ is detectable.
\end{assumption}

\subsection{Auxiliary results}
The first auxiliary result is as follows.
\begin{lemma}\label{L:detect}
Let Assumptions~\ref{A:A}, \ref{A:A1}, and~\ref{A:sens} hold true. Then, there exists an output injection operator~$\fkJ_S\in L^\infty(\bbR_+,\clL(\bbR^{S}, V'))$ such that
the solution of the system
\begin{equation}
\dot z+Az+A_{\rm rc}z=\fkJ_S\clZ z,\qquad z(0)=z_0,\notag
\end{equation}
satisfies, for some constants~$C_{\rmo,1}\ge1$ and~$\mu_1>0$ independent of~$z_0$,
 \begin{align}\label{z-expstable}  
 &\norm{z(t)}{H}\le C_{\rmo,1}\rme^{-\mu_1(t-s)}\norm{z(s)}{H},\quad\mbox{for all}\quad t\ge s\ge0.
 \end{align} 
\end{lemma}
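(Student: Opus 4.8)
The plan is to read the output injection directly off the detectability hypothesis and then to upgrade the resulting decay, which is granted \emph{a priori} only from a fixed initial instant, to the uniform two-parameter bound~\eqref{z-expstable}. First I would invoke Assumption~\ref{A:sens}: by Definition~\ref{D:detect-pair}, detectability of $(-A-A_{\rm rc},\clZ)$ furnishes some $\fkL\in L^\infty(\bbR_+,\clL(\bbR^{S},H))$ together with constants $\varrho\ge1$ and $\mu>0$ such that $\fkA_{\rm cl}\coloneqq-A-A_{\rm rc}+\fkL\clZ$ is $(\varrho,\mu)$-exponentially stable, and I would simply set $\fkJ_S\coloneqq\fkL$. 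Since $V\subseteq H$ is dense and continuous (Assumption~\ref{A:A}) and $H=H'$ is a pivot space, we have the Gelfand triple $V\subseteq H\subseteq V'$ with continuous inclusions; hence $\clL(\bbR^{S},H)\subseteq\clL(\bbR^{S},V')$ and $\fkJ_S\in L^\infty(\bbR_+,\clL(\bbR^{S},V'))$, as required. I would also record that the closed-loop operator is an admissible perturbation: $A\in\clL(V,V')$ by Assumption~\ref{A:A}, $A_{\rm rc}\in L^\infty(\bbR_+,\clL(V,V'))$ by Assumption~\ref{A:A1} through the embeddings, and $\fkJ_S\clZ\in L^\infty(\bbR_+,\clL(V,V'))$ because $\clZ\in\clL(V,\bbR^{S})$, so that $\fkA_{\rm cl}\in L^\infty(\bbR_+,\clL(V,V'))$ and the solution satisfies $z\in C(\overline{\bbR}_+,H)$, making the $H$-norm estimate meaningful.

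Next I would assemble~\eqref{z-expstable} from two ingredients. The first is a bounded-growth estimate that is \emph{uniform in the initial time}: testing the closed-loop equation with $z$ in $H$, using the coercivity $\langle Az,z\rangle_{V',V}=\norm{z}{V}^2$ of Assumption~\ref{A:A} and absorbing the lower-order contributions of $A_{\rm rc}$ and $\fkJ_S\clZ$ by Young's inequality, yields $\frac{\rmd}{\rmd t}\norm{z}{H}^2\le 2\lambda\norm{z}{H}^2$, hence $\norm{z(t)}{H}\le\ex^{\lambda(t-s)}\norm{z(s)}{H}$ for all $t\ge s\ge0$, with $\lambda$ depending only on the coercivity constant, on $C_{\rm rc}$, on $\dnorm{\fkJ_S}{L^\infty}$, and on $\clZ$, and in particular independent of $s$. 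The second ingredient is a one-horizon contraction: a fixed $\widehat T>0$ and $\kappa\in(0,1)$ with $\norm{z(s+\widehat T)}{H}\le\kappa\,\norm{z(s)}{H}$ \emph{uniformly in} $s\ge0$. Granting this, writing any $t\ge s$ as $t=s+n\widehat T+r$ with $r\in[0,\widehat T)$ and iterating the contraction on $[s,s+n\widehat T]$ while using the bounded-growth estimate on the remainder gives~\eqref{z-expstable} with $\mu_1=-\ln\kappa/\widehat T>0$ and $C_{\rmo,1}=\ex^{(\lambda+\mu_1)\widehat T}\ge1$, both independent of $z_0$ and of $s$.

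I expect the main obstacle to be exactly the \emph{uniformity in $s$} of the one-horizon contraction. The detectability hypothesis delivers it at $s=0$ — choosing $\widehat T$ with $\varrho\ex^{-\mu\widehat T}=\kappa<1$ — but for genuinely nonautonomous dynamics a decay started at a single initial time does not by itself propagate to arbitrary starting times, and it cannot be recovered from the bounded-growth estimate alone. The point I would have to secure is therefore that the exponential stability furnished by detectability is a property of the two-parameter evolution family $U(t,s)$, not merely of $U(t,0)$. This is consistent with the time-translation invariance of the uniform $L^\infty$ bounds on $A_{\rm rc}$ and $\fkJ_S\clZ$ underlying the construction, and it will be made concrete in Section~\ref{S:illust-parabolic}, where the injection is built from a time-independent oblique projection onto the finitely many unstable modes and the uniform contraction follows from a Lyapunov argument.
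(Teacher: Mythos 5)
Your proposal follows the same route as the paper, whose entire proof of this lemma is the single line ``Straightforward from regularity of parabolic-like equations and Definition~\ref{D:detect-pair}'': take $\fkJ_S$ to be the output injection $\fkL$ furnished by Assumption~\ref{A:sens} and conclude by exponential stability of the closed loop. What you add --- the embedding $\clL(\bbR^{S},H)\subseteq\clL(\bbR^{S},V')$, the energy estimate giving uniform bounded growth, and the contraction-plus-iteration upgrade from decay issued at $t=0$ to the two-parameter bound~\eqref{z-expstable} --- is precisely the detail the paper compresses into ``straightforward'', and the obstacle you flag is real: the paper's notion of $(\varrho,\mu)$-exponential stability is stated only for solutions with initial time $0$, and for genuinely nonautonomous dynamics the estimate $\norm{z(t)}{H}\le\varrho\rme^{-\mu t}\norm{z_0}{H}$ together with uniform bounded growth does \emph{not} formally imply $\norm{z(t)}{H}\le C\rme^{-\mu_1(t-s)}\norm{z(s)}{H}$ for all $t\ge s$. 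The paper resolves this implicitly rather than explicitly: the concrete detectability result invoked later (Lemma~\ref{L:detect0}, via the theorem it cites) delivers the evolution-family, two-parameter bound directly, which is the reading of Definition~\ref{D:detect-pair} the authors intend throughout. So your argument is correct and essentially the paper's, with the honest caveat --- which you state yourself --- that the uniform-in-$s$ one-horizon contraction must be taken as part of what detectability supplies, not derived from the $s=0$ statement alone.
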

\begin{proof}
Straightforward from regularity of parabolic-like equations and Definition~\ref{D:detect-pair}.
\end{proof}

The next auxiliary result is as follows.
\begin{lemma}\label{L:norm-u-optT}
The optimal control~$u_T^*(t)=u_T^*(t;  {t_{\rm in}} ,  {y_{\rm in}} )$ satisfies
\begin{equation}\notag
	\norm{u_T^*}{L^2(( {t_{\rm in}} , {t_{\rm in}} +T),\bbR^M)}\le C_{u}\norm{ {y_{\rm in}} }{H}.
\end{equation}
with~$C_u$ independent of~$ {t_{\rm in}} $ and~$T$.
\end{lemma}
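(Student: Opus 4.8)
The plan is to bound the optimal control by the cost of a cheap competitor. The key observation is that $\clJ_T$ in \eqref{cost-T} penalizes both $\norm{Qy}{H}^2$ and $\norm{u}{\bbR^M}^2$, so controlling the full cost controls the $L^2$-norm of the input. First I would invoke the stabilizability assumption (Assumption~\ref{A:act}): the pair~$(-A-A_{\rm rc},U_M^\diamond)$ is $(\varrho,\mu)$-stabilizable, so there exists~$\fkK\in L^\infty(\bbR_+,\clL(V,\bbR^M))$ such that~$-A-A_{\rm rc}+U_M^\diamond\fkK$ is $(\varrho,\mu)$-exponentially stable. Let~$\bar y$ solve the closed-loop system~$\dot{\bar y}+A\bar y+A_{\rm rc}\bar y=U_M^\diamond\bar u$ with~$\bar u\coloneqq\fkK\bar y$ and~$\bar y( {t_{\rm in}} )= {y_{\rm in}} $ on the interval~$\clI_{ {t_{\rm in}} ,T}$. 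By construction~$(\bar y,\bar u)\in\clX_{ {t_{\rm in}} ,T; {y_{\rm in}} }$, so it is an admissible competitor for Problem~\ref{P:finT}.

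The central estimate is then optimality: since~$(y_T^*,u_T^*)$ minimizes~$\clJ_T$ over~$\clX_{ {t_{\rm in}} ,T; {y_{\rm in}} }$, we have
\begin{equation}\notag
\tfrac12\norm{u_T^*}{L^2(\clI_{ {t_{\rm in}} ,T},\bbR^M)}^2\le\clJ_T(y_T^*,u_T^*; {t_{\rm in}} , {y_{\rm in}} )\le\clJ_T(\bar y,\bar u; {t_{\rm in}} , {y_{\rm in}} ).
\end{equation}
It remains to bound~$\clJ_T(\bar y,\bar u; {t_{\rm in}} , {y_{\rm in}} )$ by~$C\norm{ {y_{\rm in}} }{H}^2$ uniformly in~$ {t_{\rm in}} $ and~$T$. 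The exponential stability~\eqref{goal-intro} gives~$\norm{\bar y(t)}{H}\le\varrho\ex^{-\mu(t- {t_{\rm in}} )}\norm{ {y_{\rm in}} }{H}$, hence~$\norm{Q\bar y(t)}{H_Q}\le\norm{Q}{\clL(H,H_Q)}\varrho\ex^{-\mu(t- {t_{\rm in}} )}\norm{ {y_{\rm in}} }{H}$, and integrating the square over~$\clI_{ {t_{\rm in}} ,T}$ yields a bound~$\tfrac{\norm{Q}{}^2\varrho^2}{2\mu}\norm{ {y_{\rm in}} }{H}^2$ independent of~$T$ and~$ {t_{\rm in}} $. For the control term~$\bar u=\fkK\bar y$ I would use~$\norm{\bar u(t)}{\bbR^M}\le\norm{\fkK}{L^\infty(\bbR_+,\clL(V,\bbR^M))}\norm{\bar y(t)}{V}$; integrating this requires an~$L^2(\clI_{ {t_{\rm in}} ,T},V)$ bound on~$\bar y$ rather than just the~$H$-decay.

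The main obstacle is precisely this last point: the stabilizability definition only furnishes decay in the~$H$-norm, whereas the feedback~$\fkK$ acts on~$V$. To close the argument I would establish that exponential stability in~$H$ upgrades to an integrated~$V$-bound,~$\int_{ {t_{\rm in}} }^{\infty}\norm{\bar y(t)}{V}^2\,\rmd t\le C\norm{ {y_{\rm in}} }{H}^2$, with~$C$ independent of~$ {t_{\rm in}} $. This is the standard parabolic regularity estimate obtained by testing the closed-loop equation with~$\bar y$, using the coercivity of~$A$ from Assumption~\ref{A:A} (so that~$\langle Ay,y\rangle_{V',V}$ dominates~$\norm{y}{V}^2$), the bound on~$A_{\rm rc}$ from Assumption~\ref{A:A1}, and the boundedness of~$U_M^\diamond\fkK$; the resulting energy inequality, combined with the~$H$-decay, gives the space-time bound after a Gronwall/absorption argument. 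Once the~$V$-integral bound is in hand, both terms of~$\clJ_T(\bar y,\bar u; {t_{\rm in}} , {y_{\rm in}} )$ are controlled by~$\norm{ {y_{\rm in}} }{H}^2$ with a constant depending only on~$\varrho$,~$\mu$,~$\norm{Q}{}$,~$\norm{\fkK}{}$, and the operator bounds, and setting~$C_u$ accordingly completes the proof.
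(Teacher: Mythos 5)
Your argument is correct and follows essentially the same route as the paper: both proofs bound $\tfrac12\norm{u_T^*}{L^2(\clI_{t_{\rm in},T},\bbR^M)}^2$ by the cost of an admissible competitor whose infinite-horizon cost is controlled by $C\norm{y_{\rm in}}{H}^2$ through the stabilizability assumption. The only difference is that the paper takes the infinite-horizon optimal pair as the competitor and cites the bound $\clJ_\infty(y_\infty^*,u_\infty^*;t_{\rm in},y_{\rm in})\le C_\infty\norm{y_{\rm in}}{H}^2$ as a known consequence of stabilizability, whereas you use the stabilizing-feedback trajectory directly and supply the underlying estimate yourself, including the $L^2(V)$ bound needed because $\fkK$ acts on $V$ --- a detail the paper leaves implicit.
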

\begin{proof}
We have that
\begin{align}
\norm{u_T^*}{L^2(( {t_{\rm in}} , {t_{\rm in}} +T),\bbR^M)}^2\le 2\clJ_{T}(u_T^*,u_T^*;  {t_{\rm in}} ,  {y_{\rm in}} )
\end{align}
where~$y^*_{T}\coloneqq y_{T}^*(\Bigcdot; {t_{\rm in}} ,  {y_{\rm in}} )$ and~$u_{T}^*\coloneqq u_{T}^*(\Bigcdot;  {t_{\rm in}} ,  {y_{\rm in}} )$.
We also have, by optimality, that
\begin{align*}
\clJ_{T}(y_{T}^*, u_{T}^*;  {t_{\rm in}} ,  {y_{\rm in}} )&\le \clJ_{T}(y_{\infty}^*(\Bigcdot;  {t_{\rm in}} ,  {y_{\rm in}} ),u_{\infty}^*(\Bigcdot;  {t_{\rm in}} ,  {y_{\rm in}} ) ;  {t_{\rm in}} ,  {y_{\rm in}} ) \\    &\le \clJ_{\infty}(y_{\infty}^*(\Bigcdot;   {t_{\rm in}} ,  {y_{\rm in}} ),u_{\infty}^*(\Bigcdot;  {t_{\rm in}} ,  {y_{\rm in}} );  {t_{\rm in}} ,  {y_{\rm in}} ),
\end{align*}
where~$y^*_{\infty}\coloneqq y_{\infty}^*(\Bigcdot; {t_{\rm in}} ,  {y_{\rm in}} )$ and~$u_{\infty}^*\coloneqq u_{\infty}^*(\Bigcdot;  {t_{\rm in}} ,  {y_{\rm in}} )$

Recalling that, under the stabilizability assumption, we have that the optimal cost of the~ITH problem is bounded, namely, that
\begin{align}
\clJ_{\infty}(y_{\infty}^*(\Bigcdot;   {t_{\rm in}} ,  {y_{\rm in}} ),u_{\infty}^*(\Bigcdot;  {t_{\rm in}} ,  {y_{\rm in}} );  {t_{\rm in}} ,  {y_{\rm in}} )\le C_\infty \norm{ {y_{\rm in}} }{H}^2,\label{Cinfty}
\end{align}
we can conclude that the statement follows with~$C_u=C_\infty^\frac12$. The fact that~$C_\infty$ is independent of~$ {t_{\rm in}} $ is a consequence of the uniform bound in Assumption~\ref{A:A1}.
\end{proof}

\begin{lemma}\label{L:optTsqueez}
Let Assumptions~\ref{A:A}, \ref{A:A1}, and~\ref{A:act} hold true. Then, there are constants~$\theta\in(0,1)$ and~$T\ge\tau>0$, with~$T$ large enough, such that for every~$( {t_{\rm in}} ,\widehat y_0)\in \overline\bbR_+\times H$, the optimal pair~$(y_T^*,u_T^*)=(y_T^*,u_T^*)(t;  {t_{\rm in}} ,  {y_{\rm in}} )$ satisfies, 
 \begin{align}\label{yrc-expstable}  
 &\min_{t\in[ {t_{\rm in}} , {t_{\rm in}} +T]}\norm{y_T^*(t;  {t_{\rm in}} ,  {y_{\rm in}} )}{H}\le\theta\norm{ {y_{\rm in}} }{H}\quad\mbox{and}\quad\max\argmin_{t\in[ {t_{\rm in}} , {t_{\rm in}} +T]}\norm{y_T^*(t;  {t_{\rm in}} ,  {y_{\rm in}} )}{H}>\tau.
 \end{align} 
\end{lemma}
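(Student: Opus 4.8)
The plan is to use both halves of Assumption~\ref{A:act}: stabilizability to bound the finite-horizon optimal cost, and detectability of the pair $(-Az-A_{\rm rc},Q)$ to turn that cost bound into an $L^2$-in-time bound on the optimal state, after which both claims follow by averaging over a long horizon. First, exactly as in the proof of Lemma~\ref{L:norm-u-optT}, optimality and stabilizability give
\[
\tfrac12\int_{{t_{\rm in}}}^{{t_{\rm in}}+T}\Bigl(\norm{Qy_T^*(t)}{H_Q}^2+\norm{u_T^*(t)}{\bbR^M}^2\Bigr)\rmd t=\clJ_T(y_T^*,u_T^*;{t_{\rm in}},{y_{\rm in}})\le C_\infty\norm{{y_{\rm in}}}{H}^2,
\]
with $C_\infty$ independent of ${t_{\rm in}}$ and $T$; in particular $\norm{Qy_T^*}{L^2(\clI_{{t_{\rm in}},T},H_Q)}$ and $\norm{u_T^*}{L^2(\clI_{{t_{\rm in}},T},\bbR^M)}$ are each at most $(2C_\infty)^{1/2}\norm{{y_{\rm in}}}{H}$.

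Next I would fix, by detectability, an output injection $\fkL\in L^\infty(\bbR_+,\clL(H_Q,H))$ making $\fkF\coloneqq-A-A_{\rm rc}+\fkL Q$ exponentially stable, with evolution operator $\clE(t,s)$ satisfying $\norm{\clE(t,s)}{\clL(H)}\le\varrho_Q\rme^{-\mu_Q(t-s)}$. Rewriting the state equation as $\dot y_T^*=\fkF y_T^*+U_M^\diamond u_T^*-\fkL Qy_T^*$ and using variation of constants,
\[
y_T^*(t)=\clE(t,{t_{\rm in}}){y_{\rm in}}+\int_{{t_{\rm in}}}^{t}\clE(t,s)\bigl(U_M^\diamond u_T^*(s)-\fkL Qy_T^*(s)\bigr)\rmd s,
\]
Young's convolution inequality (with $\norm{\rme^{-\mu_Q\cdot}}{L^1(\bbR_+)}=\mu_Q^{-1}$) together with the two $L^2$-bounds above give, uniformly in ${t_{\rm in}}$ and $T$, a bound $\norm{y_T^*}{L^2(\clI_{{t_{\rm in}},T},H)}\le C_y\norm{{y_{\rm in}}}{H}$. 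Hence
\[
\min_{t\in[{t_{\rm in}},{t_{\rm in}}+T]}\norm{y_T^*(t)}{H}^2\le\frac1T\norm{y_T^*}{L^2(\clI_{{t_{\rm in}},T},H)}^2\le\frac{C_y^2}{T}\norm{{y_{\rm in}}}{H}^2,
\]
so choosing $T$ large enough that $C_yT^{-1/2}\eqqcolon\theta<1$ yields the first inequality in~\eqref{yrc-expstable}; the identical estimate on $[{t_{\rm in}}+\tau,{t_{\rm in}}+T]$ shows its minimum is likewise $\le\theta\norm{{y_{\rm in}}}{H}$ for $T$ large.

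The hard part is the second inequality, i.e.\ that the largest minimizer $t^\star\coloneqq\max\argmin_{t\in[{t_{\rm in}},{t_{\rm in}}+T]}\norm{y_T^*(t)}{H}$ lies beyond ${t_{\rm in}}+\tau$; equivalently, that the global minimum is not attained on the initial window $[{t_{\rm in}},{t_{\rm in}}+\tau]$. The tempting shortcut---a uniform lower bound $\norm{y_T^*(t)}{H}\ge c\norm{{y_{\rm in}}}{H}$ there---is false: since $V\subseteq H$ is compact (Assumption~\ref{A:A}), $A$ has discrete spectrum and the diffusion contracts high-frequency data arbitrarily fast, so no $c>0$ is uniform in ${y_{\rm in}}$. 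Instead I would split $H$ into the finite-dimensional span of the unstable low modes and its complement: on the complement $y_T^*$ is essentially contracted and cannot create an interior trough, whereas any dip-then-growth is carried by the finite-dimensional part, where all norms are equivalent and continuity is uniform on the unit sphere. Then, for $T$ large, I would compare $y_T^*$ on $[{t_{\rm in}},{t_{\rm in}}+\tau]$ with the infinite-horizon optimal state $y_\infty^*$ (which exists and decays by stabilizability): the two stay uniformly close, and the only reason $y_T^*$ eventually turns upward is the vanishing of the optimal control near the penalty-free terminal time ${t_{\rm in}}+T$, which pushes the interior minimum toward the right endpoint as $T\to\infty$, hence past the fixed $\tau$. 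Making this comparison quantitative and uniform in ${y_{\rm in}}$---proving $\inf_{{y_{\rm in}}}(t^\star-{t_{\rm in}})>\tau$---is the principal obstacle, and it is exactly the effect flagged in Observation~\ref{rem:alg:delta}: a too-small fixed sampling time need not witness the squeezing, no matter how large $T$ is.
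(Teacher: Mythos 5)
Your first half is exactly the paper's argument: detectability of $(-A-A_{\rm rc},Q)$ supplies the injection (the paper calls it $G$) making $-A-A_{\rm rc}+GQ$ generate an exponentially stable evolution, Duhamel plus Young's convolution inequality plus the uniform infinite-horizon cost bound $C_\infty$ give $\norm{y_T^*}{L^2(\clI_{ {t_{\rm in}} ,T},H)}\le C_y\norm{ {y_{\rm in}} }{H}$ with $C_y$ independent of $ {t_{\rm in}} $ and $T$, and averaging yields the squeezing factor $\theta\sim C_y(T-\tau)^{-1/2}<1$ for $T$ large. Where you diverge is in your treatment of the second assertion: you read it as a statement about the location of the \emph{global} minimizer over the full window $[ {t_{\rm in}} , {t_{\rm in}} +T]$, declare that the ``principal obstacle,'' and sketch a spectral-splitting/ITH-comparison program that you do not close. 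That obstacle is self-imposed. The paper's own proof of the second condition is nothing more than the averaging argument run on the sub-window $[ {t_{\rm in}} +\tau, {t_{\rm in}} +T]$ of length $T-\tau$: it produces a time $T^\circ\ge {t_{\rm in}} +\tau$ with $\norm{y_T^*(T^\circ)}{H}\le\theta\norm{ {y_{\rm in}} }{H}$, and this is all that is ever used downstream (in Algorithm~\ref{alg:rhc+obs} the next concatenation time is by construction the argmin over $[ {t_{\rm in}} +\tau, {t_{\rm in}} +T]$, so only the minimum over that sub-window enters the proof of Theorem~\ref{T:main-y}). You have already established precisely this in the sentence ``the identical estimate on $[ {t_{\rm in}} +\tau, {t_{\rm in}} +T]$ shows its minimum is likewise $\le\theta\norm{ {y_{\rm in}} }{H}$,'' so your proof in fact contains everything the paper's does; the stronger claim you set up (that no early dip below the eventual minimum can occur, uniformly in $ {y_{\rm in}} $) is neither proved nor needed in the paper, and your correct observation that high-frequency data rule out a uniform lower bound on $[ {t_{\rm in}} , {t_{\rm in}} +\tau]$ is an argument for abandoning that reading, not for pursuing it. In short: delete the final paragraph's program, keep the sub-window averaging, and the proof is complete and coincides with the paper's.
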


\begin{proof}
By the detectability of~$(-A-A_{\rm rc},Q)$, there exists~$G\in L^\infty(\bbR_+,\clL(H_Q,V'))$ such that
\begin{align}\label{sys-z-detect}
\dot z+Az+A_{\rm rc}z=GQz
\end{align}
is exponentially stable. Then, for the optimal pair~$(y^*_T, u^*_T)$ we find
\begin{align}
\dot y^*_T+Ay^*_T+A_{\rm rc}y^*_T=U_M^\diamond u=GQy^*_T -GQy^*_T+U_M^\diamond u^*_T,\notag
\end{align}
Denoting the evolution operator of~\eqref{sys-z-detect} by~$Z(s,t)$, $t\ge s\ge0$, and using Duhamel formula,
\begin{align}
y^*_T(t)=Z( {t_{\rm in}} ,t) {y_{\rm in}} +\int_0^t Z(s,t)(U_M^\diamond u^*_T(s)-GQy^*_T(s))\,\rmd s.\notag
\end{align}
We can see that, for some constants~$C_Z\ge1$, $\mu_Z>0$, and~$D_Z\ge0$, depending on~$(Q,G)$,
\begin{align}
\norm{y^*_T( {t_{\rm in}} +t)}{H}&\le C_Z\rme^{-\mu_Z t}\norm{ {y_{\rm in}} }{H}+C_Z\int_{ {t_{\rm in}} }^t \rme^{-\mu_Z (t-s)}\norm{U_M^\diamond u^*_T(s)-GQy^*_T(s)}{H}\,\rmd s\notag\\
&\le C_Z\rme^{-\mu_Z t}\norm{ {y_{\rm in}} }{H}+D_Z\int_{ {t_{\rm in}} }^t \rme^{-\mu_Z (t-s)}(\norm{u^*_T(s)}{\bbR^M}+\norm{Qy^*_T(s)}{H_Q})\,\rmd s,\notag
\end{align}
which implies
\begin{align}
\norm{y^*_T( {t_{\rm in}} +t)}{H}^2
&\le 2C_Z^2\rme^{-\mu_Z t}\norm{ {y_{\rm in}} }{H}^2+2D_Z^2\left(\int_{ {t_{\rm in}} }^t \rme^{-\mu_Z (t-s)}(\norm{u^*_T(s)}{\bbR^M}+\norm{Qy^*_T(s)}{H_Q})\,\rmd s\right)^2.\notag
\end{align}
By time integration over~$\clI_{ {t_{\rm in}} }\coloneqq( {t_{\rm in}} , {t_{\rm in}} +T)$, Young convolution inequality~\cite[Eq.~(2)]{Beckner75} \cite[Eq.~(2.22)]{BrascampLieb76}, and optimality, we arrive at
\begin{align}
&\norm{y^*_T}{L^2(\clI_{ {t_{\rm in}} },H)}^2\le C_4\norm{ {y_{\rm in}} }{H}^2+C_4\norm{\rme^{-\mu_Z(\Bigcdot- {t_{\rm in}} )}}{L^1(\clI_{ {t_{\rm in}} },\bbR)}^2\norm{\bigl.\norm{u^*_T}{\bbR^M}+\norm{Qy^*_T}{H_Q}\bigr.}{L^2(\clI_{ {t_{\rm in}} },\bbR)}^2\notag\\
&\hspace{1em}\le C_4\norm{ {y_{\rm in}} }{H}^2+C_5\norm{\norm{u^*_T}{\bbR^M}^2+\norm{Qy^*_T}{H_Q}^2}{L^1(\clI_{ {t_{\rm in} }},\bbR)}\notag\\
&\hspace{1em}\le C_4\norm{ {y_{\rm in}} }{H}^2+C_5   \norm{\norm{u_\infty^*}{\bbR^M}^2+\norm{Qy_\infty^*}{H_Q}^2}{L^1(( {t_{\rm in}} ,\infty),\bbR)}\le (C_4+C_5C_\infty)\norm{ {y_{\rm in}} }{H_Q}^2,\label{bound-bqueez}
\end{align}
where we used~\eqref{Cinfty}. Therefore,  with~$C_6\coloneqq (C_4+C_5C_\infty)$, for a given~$\tau\in(0,T)$, we must have
$\norm{y^*_T(T^\circ)}{H}^2\le\frac1{T-\tau}C_6\norm{ {y_{\rm in}} }{H}^2$ for some~$T^\circ\in[ {t_{\rm in}} +\tau, {t_{\rm in}} +T]$,  since otherwise we would find
\begin{align}
\norm{y^*_T}{L^2(\clI_{ {t_{\rm in}} },H)}^2&\ge\norm{y^*_T}{L^2(( {t_{\rm in}} +\tau, {t_{\rm in}} +T),H)}^2>\tfrac1{T-\tau}C_6\norm{ {y_{\rm in}} }{H}^2(T-\tau)=C_6\norm{ {y_{\rm in}} }{H}^2,\notag
\end{align}
 which would contradict~\eqref{bound-bqueez}. Now, by choosing~$T>\tau+\theta^{-1}C_6$ we find that that~$\frac1{T-\tau}C_6<\theta$.
 Therefore, the result follows for~$T=\ovlineC{C_Z,D_Z,\tau,C_\infty,\theta^{-1}}$ large enough.
\end{proof}

\subsection{Main result}
We are now able to state the main result of this manuscript is as follows.
\begin{theorem}\label{T:main-y}
Let Assumptions~\ref{A:A}--\ref{A:act} hold true. Let~$\mu_1$ and~$\theta$ be as in~\eqref{z-expstable} and~\eqref{yrc-expstable}, and let~$\mu_2<\min\{\mu_1,T^{-1}\log(\theta^{-1})\}$. Then, there are constants~$D_1\ge 1$ and~$D_2>0$, such that the state~$y_{\rm orh}$ associated with the input~$u_{\rm orh}={\rm ORHC}(\tau,T;\clZ,\widehat y_0)$, provided by Algorithm~\ref{alg:rhc+obs}, satisfies
 \begin{align}\label{y-expstable}  
 &\norm{ (y_{\rm orh}(t),z(t))}{H\times H}\le D_1\rme^{-\mu_2t}\norm{ (y_{\rm orh}(0),z(0))}{H\times H},\quad\mbox{for all}\quad t\ge0,
 \end{align} 
 for every guess~$\widehat y_0$, with the control input satisfying~$\norm{u_{\rm orh}}{L^2(\bbR_+,\bbR^{M})}\le D_2\norm{y(0),z(0)}{H\times H}$.\\
In the case~$\mu_1\ne T^{-1}\log(\theta^{-1})$ we can take~$\mu_2=\min\{\mu_1,T^{-1}\log(\theta^{-1})\}$.
\end{theorem}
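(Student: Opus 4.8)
The plan is to exploit the separation structure in~\eqref{sys-coupled-error}: since the error dynamics~\eqref{sys-coupled-error-z} is independent of the controlled state, I would first fix the output injection operator as~$\fkI_S=\fkJ_S$ given by Lemma~\ref{L:detect}, so that the error satisfies the continuous-time bound~$\norm{z(t)}{H}\le C_{\rmo,1}\rme^{-\mu_1 t}\norm{z(0)}{H}$ for all~$t\ge0$. It then remains to control~$y_{\rm orh}$ in the presence of the estimation error. The key observation is that on each subinterval~$(t_n,t_{n+1})$ both~$y_{\rm orh}$ and the optimal state~$y_T^*(\Bigcdot;t_n,\widehat y(t_n))$ solve the same controlled equation driven by the common input~$u_{\rm orh}=u_T^*(\Bigcdot;t_n,\widehat y(t_n))$, so their difference~$d\coloneqq y_{\rm orh}-y_T^*(\Bigcdot;t_n,\widehat y(t_n))$ solves the free dynamics~\eqref{sys-y-intro} with initial datum~$d(t_n)=y_{\rm orh}(t_n)-\widehat y(t_n)=-z(t_n)$. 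Writing~$a_n\coloneqq\norm{y_{\rm orh}(t_n)}{H}$ and~$b_n\coloneqq\norm{z(t_n)}{H}$, I would combine the boundedness of the free evolution operator over intervals of length at most~$T$ (valid by Assumptions~\ref{A:A}--\ref{A:A1}) with the squeezing estimate of Lemma~\ref{L:optTsqueez} applied at~$t_{n+1}\in[t_n+\tau,t_n+T]$, together with~$\norm{\widehat y(t_n)}{H}\le a_n+b_n$, to obtain a discrete recursion of the form~$a_{n+1}\le\theta a_n+C_7 b_n$.

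The second step is to solve this recursion. Substituting the continuous bound~$b_n\le C_{\rmo,1}\rme^{-\mu_1 t_n}b_0$ and iterating yields~$a_n\le\theta^n a_0+C_8 b_0\sum_{k=0}^{n-1}\theta^{n-1-k}\rme^{-\mu_1 t_k}$. To convert this into a decay rate in continuous time, I would use that the variable sampling satisfies~$n\tau\le t_n\le nT$, whence~$\theta^n\le\rme^{-\mu_* t_n}$ with~$\mu_*\coloneqq T^{-1}\log(\theta^{-1})$, and~$\theta^{n-1-k}\le\theta^{-1}\rme^{-\mu_*(t_n-t_k)}$. The convolution sum is then estimated by a geometric series in the exponents~$\mu_*$ and~$\mu_1$; factoring out~$\rme^{-\mu_* t_n}$ when~$\mu_*<\mu_1$ and~$\rme^{-\mu_1 t_n}$ when~$\mu_*>\mu_1$ gives~$a_n+b_n\le C\rme^{-\mu_2 t_n}(a_0+b_0)$ for any~$\mu_2\le\min\{\mu_1,\mu_*\}$; in the borderline case~$\mu_1=\mu_*$ the sum picks up a factor~$n\lesssim t_n$, which is absorbed only after conceding a strictly smaller rate, explaining the strict inequality in the statement and the improvement to equality when~$\mu_1\ne\mu_*$.

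Finally, I would pass from the concatenation instants to all~$t\ge0$. For~$t\in[t_n,t_{n+1}]$ the uniform bound~$\norm{y_T^*(\Bigcdot;t_n,\widehat y(t_n))}{C([t_n,t_n+T],H)}\le C\norm{\widehat y(t_n)}{H}$, obtained from parabolic regularity together with the control bound of Lemma~\ref{L:norm-u-optT}, and the free-dynamics bound on~$d$, give~$\norm{y_{\rm orh}(t)}{H}\le C_{13}(a_n+b_n)$; since~$t-t_n\le T$, the discrete decay upgrades to the continuous estimate~\eqref{y-expstable}. The control bound follows by summing~$\norm{u_{\rm orh}}{L^2((t_n,t_{n+1}),\bbR^M)}^2\le C_u^2\norm{\widehat y(t_n)}{H}^2\le C_u^2(a_n+b_n)^2$ over~$n$ and using the geometric decay of~$a_n+b_n$. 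I expect the main obstacle to be the rate bookkeeping of the second step: correctly matching the discrete contraction factor~$\theta$ per interval against the continuous error-decay rate~$\mu_1$ under variable sampling lengths~$\tau_n\in[\tau,T]$, and in particular isolating the resonant case~$\mu_1=\mu_*$ that necessitates the strict inequality~$\mu_2<\min\{\mu_1,T^{-1}\log(\theta^{-1})\}$.
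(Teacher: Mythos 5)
Your proposal is correct and follows essentially the same route as the paper: the same decomposition of $y_{\rm orh}$ on each interval $(t_n,t_{n+1})$ into the optimal trajectory started from $\widehat y(t_n)$ plus a freely evolving difference with initial datum $-z(t_n)$, the same discrete recursion $a_{n+1}\le\theta a_n+Cb_n$ fed by the observer's exponential decay, the same case analysis of the convolution sum (including the resonant case $\mu_1=T^{-1}\log\theta^{-1}$, which the paper also resolves by conceding an arbitrarily small loss $\varepsilon$ in the rate), the same passage to continuous time via $n\tau\le t_n\le nT$ together with Lemma~\ref{L:norm-u-optT} for the interval interiors, and the same summation for the control bound. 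No substantive differences to report.
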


\begin{proof}
 For~$t\in\clI_{n}\coloneqq(t_n,t_{n+1})$, we compare the solution of~\eqref{sys-coupled-error-y}, satisfying
\begin{align}
 \dot y_{\rm orh} +Ay_{\rm orh}+A_{\rm rc}y_{\rm orh} &=U_M^\diamond u_{\rm orh}
,\notag
\end{align}
where~$y_{\rm orh} 
(t_{n})$ is unknown and~$u_{\rm orh}=u_{\rm orh}(\tau,T;\clZ,\widehat y(t_{n}))$ is the input given by Algorithm~\ref{alg:rhc+obs},
with the solution of
\begin{align}\label{sys.bary}
 \dot {\underline y} +A{\underline y} +A_{\rm rc}{\underline y}  &=U_M^\diamond u_{\rm orh},\qquad {\underline y} 
(t_{n})=\widehat y(t_{n}),
\end{align}
For the difference~$\chi\coloneqq y_{\rm orh}-\underline y$ we find
\begin{align}
 \dot \chi +A\chi +A_{\rm rc}\chi  &=0,\qquad \chi(t_{n})=\chi_n\coloneqq y_{\rm orh}(t_{n})-\widehat y(t_{n}),\notag
\end{align}
and~$\norm{\chi(t)}{H}\le C_1\norm{\chi_n}{H}$, for some constant~$C_1=\ovlineC{\tau}\ge1$ and all~$t\in\clI_{n}$. Hence,
\begin{align}\notag
\norm{ y_{\rm orh}(t)}{H}=\norm{ \chi(t)+\underline y(t)}{H}\le C_1\norm{y_{\rm orh}(t_{n})-\widehat y(t_{n})}{H}+\norm{\underline y(t)}{H}.
\end{align}

Next, note that, within system~\eqref{sys.bary}, the input coincides with  the optimal input~$u_{\rm orh}(t)=u_T^*(t;  {t_{\rm in}} ,  {y_{\rm in}} )$, for~$t\in\clI_{n}$.
Therefore, by Lemma~\ref{L:optTsqueez}, it follows that
\begin{align}
\norm{ y_{\rm orh}(t_{n+1})}{H}&\le C_1\norm{y_{\rm orh}(t_n)-\widehat y(t_n)}{H}+\theta\norm{\widehat y(t_n)}{H}\notag\\
&\le (C_1+\theta)\norm{y_{\rm orh}(t_{n})-\widehat y(t_{n})}{H}+\theta\norm{y_{\rm orh}(t_n)}{H}.\notag%
\end{align}

Recalling that~$z= \widehat y-y_{\rm orh}$ satisfies the dynamics in~\eqref{sys-coupled-error-z}, by 
Lemma~\ref{L:detect} we obtain, in particular, with~$C_2=(C_1+\theta)C_{\rmo,1}$,
\begin{align}
\norm{ y_{\rm orh}(t_{n+1})}{H}&\le C_2\rme^{-\mu_1t_n}\norm{z(0)}{H}+\theta\norm{y_{\rm orh}(t_{n})}{H}.\label{sq.yorh1}
\end{align}

Next, we iterate~\eqref{sq.yorh1}, to obtain
\begin{align}
\norm{ y_{\rm orh}(t_{n+1})}{H}&\le C_2\rme^{-\mu_1t_{n}}\norm{z(0)}{H}+\theta\left(C_2\rme^{-\mu_1t_{n-1}}\norm{z(0)}{H}+\theta\norm{y_{\rm orh}(t_{n-1})}{H}\right)\notag\\
&= C_2(\rme^{-\mu_1t_{n}}+\theta\rme^{-\mu_1t_{n-1}})\norm{z(0)}{H}+\theta^{2}\norm{y_{\rm orh}(t_{n-1})}{H}\notag\\
&\le C_2\Bigl({\textstyle\sum\limits_{j=0}^n}\theta^j\rme^{-\mu_1t_{n-j}}\Bigr)\norm{z(0)}{H}+\theta^{n+1}\norm{y_{\rm orh}(0)}{H},\notag
\end{align}
and observe that~$t_{n-j}\ge(n-j)\tau$, which leads to
\begin{align}
\norm{ y_{\rm orh}(t_{n+1})}{H}&\le C_2\Bigl({\textstyle\sum\limits_{j=0}^n}\theta^j\rme^{-\mu_1(n-j)\tau}\Bigr)\norm{z(0)}{H}+\theta^{n+1}\norm{y_{\rm orh}(0)}{H}.\label{sq.yorh2}
\end{align}
Next, we consider the three cases~$\theta\rme^{\mu_1\tau}<1$, $\theta\rme^{\mu_1\tau}>1$, and~$\theta\rme^{\mu_1\tau}=1$ separately.

\begin{subequations}\label{sq.yorh2.x}
\noindent$\bullet$ Case~$\theta\rme^{\mu_1\tau}<1$. We write~$\theta^j\rme^{-\mu_1(n-j)\tau}=(\theta\rme^{\mu_1\tau})^j\rme^{-\mu_1n\tau}$ and, from~\eqref{sq.yorh2}, 
\begin{align}
\norm{ y_{\rm orh}(t_{n+1})}{H}&\le C_2(1-\theta\rme^{\mu_1\tau})^{-1}\rme^{\mu_1\tau}\rme^{-\mu_1\tau(n+1)}\norm{z(0)}{H}+\theta^{n+1}\norm{y_{\rm orh}(0)}{H}.\label{sq.yorh2.1}
\end{align}

\noindent$\bullet$ Case~$\theta\rme^{\mu_1\tau}>1$. We write~$\theta^j\rme^{-\mu_1(n-j)\tau}=\theta^n(\theta^{-1}\rme^{-\mu_1\tau})^{n-j}$ and, from~\eqref{sq.yorh2},
\begin{align}
\norm{ y_{\rm orh}(t_{n+1})}{H}&\le  C_2(1-\theta^{-1}\rme^{-\mu_1\tau})^{-1}\theta^{-1}\theta^{n+1}\norm{z(0)}{H}+\theta^{n+1}\norm{y_{\rm orh}(0)}{H}\label{sq.yorh2.2}
\end{align}

\noindent$\bullet$ Case~$\theta\rme^{\mu_1\tau}=1$. We fix an arbitrary~$\varepsilon\in(0,\mu_1)$ and write~$\theta^j\rme^{-\mu_1(n-j)\tau}=\rme^{-\mu_1n\tau}=\rme^{-\varepsilon\tau n}\rme^{-(\mu_1-\varepsilon)\tau n}$ and, from~\eqref{sq.yorh2}, 
\begin{align}
\norm{ y_{\rm orh}(t_{n+1})}{H}&\le  C_2(n+1)\rme^{-\varepsilon\tau n}\rme^{-(\mu_1-\varepsilon)\tau n}\norm{z(0)}{H}+\theta^{n+1}\norm{y_{\rm orh}(0)}{H}\notag\\
&\hspace{-3em}\le C_2(1+\max\{n\rme^{-\varepsilon\tau n}\mid n\in\bbN\})\rme^{-(\mu_1-\varepsilon)n\tau}\norm{z(0)}{H}+\theta^{n+1}\norm{y_{\rm orh}(0)}{H}\notag\\
&\hspace{-3em}\le C_2(1+(\varepsilon\tau)^{-1}\rme^{-1})\rme^{-(\mu_1-\varepsilon)n\tau}\norm{z(0)}{H}+\theta^{n+1}\norm{y_{\rm orh}(0)}{H}\notag\\
&\hspace{-3em}\le C_2(1+(\varepsilon\tau)^{-1}\rme^{-1})\rme^{(\mu_1-\varepsilon)\tau}\rme^{-(\mu_1-\varepsilon)\tau(n+1)}\norm{z(0)}{H}+\theta^{n+1}\norm{y_{\rm orh}(0)}{H}.\label{sq.yorh2.3}
\end{align}
\end{subequations}
Therefore, in either case~$\theta\rme^{\mu_1\tau}>0$, by~\eqref{sq.yorh2.x} we conclude that
\begin{subequations}\label{sq.yorh3}
\begin{align}
&\norm{ y_{\rm orh}(t_{n+1})}{H}\le  C_2D\theta_1^{n+1}\norm{z(0)}{H}+\theta^{n+1}\norm{y_{\rm orh}(0)}{H},\\
&\qquad\mbox{with}\quad\theta_1\coloneqq\begin{cases}
\max\{\theta,\rme^{-\mu_1\tau}\}&\mbox{if }\theta\ne\rme^{-\mu_1\tau},\\
\rme^{-(\mu_1-\varepsilon)\tau}&\mbox{if }\theta=\rme^{-\mu_1\tau},
\end{cases}
\end{align}
\end{subequations}
and a constant $D>0$.

Now, for~$t\in\overline \clI_{n+1}=[t_{n+1},t_{n+2}]$, we find, with some constant~$C_3=\ovlineC{T}$,
\begin{align}
\norm{ y_{\rm orh}(t)}{H}^2&\le C_3\left(\norm{ y_{\rm orh}(t_{n+1})}{H}^2+\norm{ u_{\rm orh}}{L^2(\clI_{n+1},\bbR^M)}^2\right)\notag\\
&\le C_3\left(\norm{ y_{\rm orh}(t_{n+1})}{H}^2+C_u^2\norm{ \widehat y(t_{n+1})}{H}^2\right)\notag\\
&\le C_3\left((1+2C_u^2)\norm{ y_{\rm orh}(t_{n+1})}{H}^2+2C_u^2\norm{y_{\rm orh}(t_{n+1})-\widehat y(t_{n+1})}{H}^2\right)\notag
\end{align}
with~$C_u$ as in Lemma~\ref{L:norm-u-optT}. Recalling again that~$z= \widehat y-y_{\rm orh}$ satisfies the dynamics in~\eqref{sys-coupled-error-z}, by~\eqref{sq.yorh3} and Lemma~\ref{L:detect} we find
\begin{align}
\norm{ y_{\rm orh}(t)}{H}^2&\le C_4\theta_1^{2(n+1)}\norm{z(0)}{H}^{2}+C_5\theta^{2(n+1)}\norm{y_{\rm orh}(0)}{H}^2+C_u^2C_{\circ,1}^2\rme^{-2\mu_1t_{n+1}}\norm{z(0)}{H}^2\notag\\
&\le  C_4\theta_1^{2(n+1)}\norm{z(0)}{H}^{2}+C_5\theta^{2(n+1)}\norm{y_{\rm orh}(0)}{H}^2+C_u^2C_{\circ,1}^2\rme^{-2\mu_1\tau(n+1)}\norm{z(0)}{H}^2
\notag
\end{align}
with~$C_4=2C_3(1+2C_u^2)C_2^2D^{2}$ and~$C_5=2C_3(1+2C_u^2)$.

Next, we note that we have~$\theta_1\ge\max\{\theta,\rme^{-\mu_1\tau}\}$ and
\begin{align}
\theta_1^{2(n+1)}=\rme^{-2(n+1)\log(\theta_1^{-1})}=\rme^{-2(n+1)t_{n+1}^{-1}\log(\theta_1^{-1})t_{n+1}}\le\rme^{-2T^{-1}\log(\theta_1^{-1})t_{n+1}},\notag
\end{align}
due to~$t_{n+1}\le (n+1)T$. Hence, we arrive at
\begin{align}
\norm{ y_{\rm orh}(t)}{H}^2&\le(C_4+C_u^2C_{\circ,1}^2)\rme^{-2\mu_2t_{n+1}}\norm{z(0)}{H}^2+C_5\rme^{-2\mu_2t_{n+1}}\norm{y_{\rm orh}(0)}{H}^2\notag\\
&\le C_6\rme^{-2\mu_2t}\left(\norm{z(0)}{H}^2+\norm{y_{\rm orh}(0)}{H}^2\right)\,\label{sq.yorh-t>delta}\\
&\hspace{-3em}\mbox{with}\quad \mu_2\coloneqq T^{-1}\log(\theta_1^{-1}),\quad\mbox{for}\quad t\in[t_{n+1},t_{n+2}],\quad n\in\bbN,\label{sq.yorh-t>deltat}
\end{align}
where~$C_6=\rme^{2\mu_2T}\max\{C_4+C_u^2C_{\circ,1}^2,C_5\}$.

Next, for~$t\in[0,t_1]$, we obtain
\begin{align}
\norm{ y_{\rm orh}(t)}{H}^2&\le C_3\left(\norm{ y_{\rm orh}(0)}{H}^2+\norm{ u_{\rm orh}}{L^2((0,t_1),H)}^2\right)\le C_3\left(\norm{y_{\rm orh}(0)}{H}^2+C_u^2\norm{ \widehat y(0)}{H}^2\right)\notag\\
&\le  C_3\left((1+2C_u^2)\norm{y_{\rm orh}(0)}{H}^2+2C_u^2\norm{ z(0)}{H}^2\right)\notag\\
&\le  C_3\rme^{2\mu_2T}\rme^{-2\mu_2t}\left((1+2C_u^2)\norm{y_{\rm orh}(0)}{H}^2+2C_u^2\norm{ z(0)}{H}^2\right),\quad\mbox{for}\quad t\in[0,t_1].\label{sq.yorh-t<delta}
\end{align}
By~\eqref{sq.yorh-t>delta} and~\eqref{sq.yorh-t<delta}, it follows that, for some constant~$C_7\ge1$,
\begin{align}
&\norm{ y_{\rm orh}(t)}{H}^2\le C_7\rme^{-2\mu_2t}\left(\norm{z(0)}{H}^2+\norm{y_{\rm orh}(0)}{H}^2\right),\qquad 0\le t<\lim_{n\to+\infty}t_n=\infty.\label{sq.yorh-t>0}
\end{align}
Note that by construction within Algorithm~\ref{alg:rhc+obs}, we have that~$t_n>n\tau\to\infty$.
Using again Lemma~\ref{L:detect}, we arrive at
\begin{align}
\norm{ (y_{\rm orh}(t),z(t))}{H\times H}^2&=\norm{y_{\rm orh}(t)}{H}^2+\norm{ z(t)}{H}^2\notag\\
&\le C_7\rme^{-2\mu_2t}\left(\norm{z(0)}{H}^2+\norm{y_{\rm orh}(0)}{H}^2\right)+C_{\rmo,1}^2\rme^{-2\mu_1 t}\norm{ z(0)}{H}^2\notag\\
&\le C_8 \rme^{-2\mu_2t}\norm{(y_{\rm orh}(0),z(0))}{H\times H}^2,\quad\mbox{for all}\quad t\ge0,\notag
\end{align}
with~$C_8\coloneqq \max\{C_7,C_{\rmo,1}^2\}$, which gives us~\eqref{y-expstable}, with~$D_1\coloneqq C_8^\frac12$.

Finally, for the control input, using Lemma \ref{L:norm-u-optT},   we find the estimates
\begin{align}
\norm{u_{\rm orh}}{L^2((0,\infty),\bbR^{M})}^2&= \sum_{n=0}^{\infty}\norm{u_{\rm orh}}{L^2(\clI_{n},\bbR^{M})}^2\le C^2_{u}\sum_{n=0}^{\infty}\norm{\widehat y(t_n)}{H}^2\notag\\
&\le 2C^2_{u}\sum_{n=0}^{\infty}\left(\norm{y_{\rm orh}(t_n)}{H}^2+\norm{z(t_n)}{H}^2\right)\le 2C^2_{u}C_8\norm{y(0),z(0)}{H\times H}^2\sum_{n=0}^{\infty}\rme^{-2t_n}\notag\\
&\le 2C^2_{u}C_8\norm{y(0),z(0)}{H\times H}^2\sum_{n=0}^{\infty}\rme^{-2n\tau}= D_2^2\norm{y(0),z(0)}{H\times H}^2,\notag
\end{align}
with~$D_2\coloneqq (2C^2_{u}C_8(1-\rme^{-2\tau})^{-1})^\frac12$,
which finishes the proof.
\end{proof}

 \section{Example of application}\label{S:illust-parabolic}
The results in the previous section  can be applied to concrete parabolic equations,  as
\begin{subequations}\label{sys-yhaty-parab-CLoop} 
\begin{align}
&\tfrac{\p}{\p t} \widehat y -\nu\Delta \widehat y+a\widehat y +b\cdot\nabla \widehat y
=U_{M}^\diamond u+\fkI_S(\clZ\widehat y-\clZ    y),\\
 &\tfrac{\p}{\p t} y -\nu\Delta y+ay +b\cdot\nabla y
 =U_{M}^\diamond u,\\
  &\fkT y\rest{\Gamma}=0=\fkT\widehat y\rest{\Gamma},\quad
(y(0),\widehat y(0))=(y_0,\widehat y_0),
       \end{align}
 \end{subequations}
with the state~$y$ defined in a bounded open convex polygonal/polyhedral  spatial domain~$\Omega\subset \bbR^d$, where~$d$ is a positive integer (in applications, often~$d\in\{1,2,3\}$).
The controlled state~$y=y(x,t)$ and its estimate~$\widehat y=\widehat y(x,t)$ are
 functions defined for~$(x,t)\in\Omega\times(0,+\infty)$. The operator~$\fkT$
imposes the boundary conditions
at the boundary~$\Gamma=\p\Omega$ of~$\Omega$,
\begin{align}
  \fkT &=\Id,&&\quad\mbox{for Dirichlet boundary conditions},\notag\\
  \fkT &=\bfn\cdot\nabla,&&\quad\mbox{for Neumann boundary conditions,}\notag
\end{align}
where~$\bfn=\bfn(\bar x)$ stands for the outward unit normal vector to~$\Gamma$, at~$\bar x\in\Gamma$.

The functions~$a=a(x,t)$ and $b=b(x,t)$, 
defined in~$\Omega\times(0,+\infty)$ satisfy
\begin{align}
 &a\in L^\infty(\Omega\!\times\!(0,+\infty)),\;\; b\in L^\infty(\Omega\!\times\!(0,+\infty))^d.
 \label{assum.abf.parab}
\end{align}

By defining, for both Dirichlet and Neumann boundary conditions, the spaces
\begin{align}
H^1_\fkT(\Omega)&\coloneqq\begin{cases}
 \{h\in  H^1(\Omega)\mid h\rest\Gamma=0\},\quad&\mbox{if }\fkT=\Id;\\
H^1(\Omega),\quad&\mbox{if }\fkT=\bfn\cdot\nabla;
\end{cases}\\
H^2_\fkT(\Omega)&\coloneqq\{h\in  H^2(\Omega)\mid \fkT h\rest\Gamma=0\},
\end{align}
we can write~\eqref{sys-yhaty-parab-CLoop} in the abstract form~\eqref{sys-coupled}. For this purpose, we set
\begin{equation}\notag
 H\coloneqq L^2(\Omega),\quad V\coloneqq H^1_{\fkT}(\Omega),\quad\mbox{and}\quad \rmD(A)\coloneqq H^2_\fkT(\Omega),
\end{equation}
and the linear operators
\begin{equation}\label{A-Lap}
 A\coloneqq -\nu\Delta+\Id\quad\mbox{and}\quad A_{\rm rc}\coloneqq (a-1)\Id +b\cdot\nabla.
\end{equation}

It is straightforward to check that Assumptions~\ref{A:A} and~\ref{A:A1} are satisfied. Therefore, it remains to check the satisfiability of Assumptions~\ref{A:sens} and~\ref{A:act}, which we shall do in Sections~\ref{sS:parab-detect} and~\ref{sS:parab-stabil}. For this purpose,  we need  first to introduce appropriate sets of actuators and sensors in Section~\ref{sS:parab-actsens}.

\subsection{Actuators and sensors} \label{sS:parab-actsens}
For simplicity, we follow~\cite[Sect.~6]{Rod21-aut} by considering the same number of actuators and sensors, $M=S$.
Both will be taken as indicator functions of small rectangular subdomains as illustrated in
Fig.~\ref{fig:suppActSensGrid}, for a planar rectangle~$\Omega=\Omega^\times\in\bbR^2$. An analogue argument can be followed for a triangular domain and more generally for a convex polygonal/polyhedral domain~$\Omega\subset\bbR^d$, $d\in\bbN_+$; see~\cite[Rem.~2.8]{AzmiKunRod23}.

\begin{figure}[htbp]%
    \centering%
          \includegraphics[width=1\textwidth]{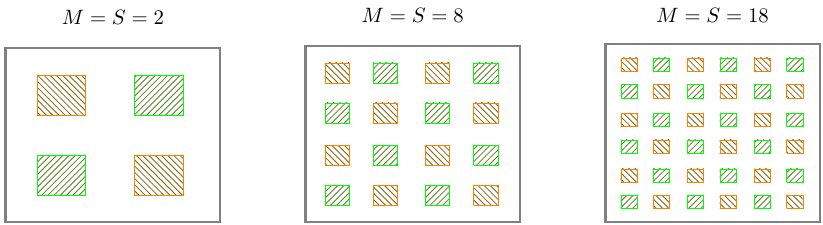}
          \caption{Supports of actuators (slash-$\slash$-lines) and sensors (backslash-$\backslash$-lines).} \label{fig:suppActSensGrid}
\end{figure}

 \begin{remark}
 The set of actuators and the set of sensors, as in Fig.~\ref{fig:suppActSensGrid}, cover each the same percentage of the domain,
independently of the
number of the actuators and sensors. 
 \end{remark}

\subsection{Detectability. Observer design} \label{sS:parab-detect}
We show the satisfiability of Assumption~\ref{A:sens}.
For this purpose, we consider average-like sensors in the form
\begin{equation}\label{output-form-intro}
w=\clZ y,\qquad w_j\coloneqq(\indf_{\omega_j^S},z)_H=\int_{\omega_j^M}z(x)\rmd x,
\end{equation}
where~$\indf_{\omega_j^S}$ is the indicator function of a rectangular subdomains
\begin{align}\notag
\omega_j^S\coloneqq\bigtimes_{n=1}^d(l_{(S,j,n,1)},l_{(S,j,n,2)})\subseteq\Omega, \qquad l_n=l_{(S,j,n,2)}-l_{(S,j,n,1)}>0.
 \end{align}
as in Fig.~\ref{fig:suppActSensGrid}, $S\in\{2s^d\mid s\in\bbN_+\}$. Let us denote the set of linearly independent sensors by
 \begin{align}
W_S\coloneqq \{\indf_{\omega_j^S}\mid 1\le j\le S\},\qquad \clW_S\coloneqq\linspan W_S,\qquad\dim\clW_S = S.\notag
 \end{align}

Let~$P_{F}$ denote the orthogonal projection in~$H$ onto~$F$.
The following result is a consequence of the result in~\cite[Thm.~3.1]{KunRodWal21}. It shows, in particular, the satisfiability of Assumption~\ref{A:sens}, with the
output injection operator as~$\fkL=\fkI_S\coloneqq-\lambda P_{\clW_{S}}\bfZ^{W_S}$.
\begin{lemma}\label{L:detect0}
Let~$\mu_1>0$. With the sensors localized as in Fig.~\ref{fig:suppActSensGrid}, for~$S$ and~$\lambda$ large enough, the system
 \begin{align}\notag
 &\hspace*{-.5em}\dot z +Az+A_{\rm rc}z
 =-\lambda P_{\clW_{S}}\bfZ^{W_S}\clZ z
,\qquad z(0)= z_0,
 \end{align}
is exponentially stable with rate~$\mu_1$.   The operator~$\bfZ^{W_S}\colon\bbR^{S}\to\clW_{S}$ is defined by
\begin{align}\notag
\bfZ^{W_S}\bfz 
\coloneqq\sum\limits_{j=1}^{S}
\left([\clV_{S}]^{-1} \bfz\right)_j\indf_{\omega_j^S},\qquad \bfz\in\bbR^{S},
\end{align}
 where~$[\clV_{S}]\in\bbR^{S\times S}$ is the matrix with entries in the~$i$-th row and~$j$-th column given by~$[\clV_{S}]_{(i,j)}=(\indf_{\omega_i^S},\indf_{\omega_j^S})_H.
$
\end{lemma}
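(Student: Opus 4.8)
The plan is to recognise the output-injection term as a scalar multiple of a genuine \emph{orthogonal} projection and then to invoke the projection-based stabilization result of \cite[Thm.~3.1]{KunRodWal21}. The first step is to verify that $\bfZ^{W_S}\clZ$ coincides with the orthogonal projection $P_{\clW_S}\colon H\to\clW_S$. Indeed, for $z\in H$ the vector $\clZ z\in\bbR^{S}$ collects the inner products $(\indf_{\omega_j^S},z)_H$, while the Gram matrix $[\clV_{S}]$ has entries $(\indf_{\omega_i^S},\indf_{\omega_j^S})_H$; hence $\bfZ^{W_S}\clZ z=\sum_{j}\bigl([\clV_{S}]^{-1}\clZ z\bigr)_j\indf_{\omega_j^S}$ is precisely the element of $\clW_{S}=\linspan W_S$ whose coordinates solve the normal equations, i.e.\ $\bfZ^{W_S}\clZ z=P_{\clW_{S}}z$. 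Since $P_{\clW_{S}}$ fixes $\clW_{S}$, the feedback term simplifies to $-\lambda P_{\clW_{S}}\bfZ^{W_S}\clZ z=-\lambda P_{\clW_{S}}z$, so the error dynamics reads
\[
\dot z+Az+A_{\rm rc}z=-\lambda P_{\clW_{S}}z,\qquad z(0)=z_0,
\]
which is exactly the projection-based closed loop treated in \cite[Thm.~3.1]{KunRodWal21}.

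Next I would check that, under Assumptions~\ref{A:A} and~\ref{A:A1}, the hypotheses of that theorem reduce to an approximation inequality of Poincar\'e type: the existence of $\beta_S>0$ with $\beta_S\to0$ as $S\to\infty$ such that
\[
\norm{(\Id-P_{\clW_{S}})h}{H}\le\beta_S\norm{h}{V}\qquad\text{for all }h\in V.
\]
To make its role transparent I would test the error equation with $z$ in $H$: using that $(y,z)\mapsto\langle Ay,z\rangle_{V',V}$ is the $V$-scalar product (Assumption~\ref{A:A}) gives $\langle Az,z\rangle=\norm{z}{V}^2$; splitting $A_{\rm rc}=A_{\rm rc}^{(1)}+A_{\rm rc}^{(2)}$ into its $\clL(H,V')$ and $\clL(V,H)$ parts and using Assumption~\ref{A:A1} bounds the reaction--convection term by $C_{\rm rc}\norm{z}{V}\norm{z}{H}$; and the feedback term expands, since $P_{\clW_S}$ is a self-adjoint projection, as $-\lambda\norm{P_{\clW_{S}}z}{H}^2=-\lambda\norm{z}{H}^2+\lambda\norm{(\Id-P_{\clW_{S}})z}{H}^2\le-\lambda\norm{z}{H}^2+\lambda\beta_S^2\norm{z}{V}^2$. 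Absorbing the cross term by Young's inequality, then fixing $\lambda$ large (depending on $C_{\rm rc}$ and the target rate $\mu_1$) and afterwards $S$ large enough that $\lambda\beta_S^2$ is small, yields $\tfrac12\tfrac{\rmd}{\rmd t}\norm{z}{H}^2\le-\mu_1\norm{z}{H}^2$, hence exponential stability with rate $\mu_1$.

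The crux, and the main obstacle, is establishing the approximation inequality for the indicator-function subspaces of Fig.~\ref{fig:suppActSensGrid}. This is delicate because, by the Remark above, the supports $\omega_j^S$ cover only a fixed percentage of $\Omega$, so $P_{\clW_{S}}$ is \emph{not} the full piecewise-constant averaging projection and the textbook estimate $\norm{h-P_{\clW_{S}}h}{H}\le C\,(\text{mesh})\norm{h}{V}$ does not apply verbatim; the uncovered ``holes'' must be controlled as well. I would instead exploit the specific geometric layout of the cells (each uncovered region sharing a face with a covered one) together with a local Poincar\'e estimate on the union of a covered cell and its uncovered neighbour, bounding $\norm{(\Id-P_{\clW_{S}})h}{H}$ by the cell diameter times $\norm{\nabla h}{H}$; since the cell diameters scale like $s^{-1}$ with $S=2s^{d}$, this gives $\beta_S\le Cs^{-1}\to0$.

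For the bookkeeping of this geometric argument I would follow \cite[Sect.~6]{Rod21-aut} and \cite[Rem.~2.8]{AzmiKunRod23}, which carry it out for exactly this family of actuator/sensor supports. Invoking it verifies the hypothesis of \cite[Thm.~3.1]{KunRodWal21} and closes the proof. In particular this exhibits, via Definition~\ref{D:detect-pair}, the detectability required in Assumption~\ref{A:sens}, with output injection operator $\fkL=\fkI_{S}=-\lambda P_{\clW_{S}}\bfZ^{W_S}$, since $-A-A_{\rm rc}+\fkI_S\clZ=-A-A_{\rm rc}-\lambda P_{\clW_{S}}$ is exponentially stable.
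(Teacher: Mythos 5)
Your identification $\bfZ^{W_S}\clZ=P_{\clW_S}$, so that the injected term is $-\lambda P_{\clW_S}z$, is exactly the paper's Remark~\ref{Zout-orthproj}, and the paper itself offers no further proof of this lemma beyond invoking \cite[Thm.~3.1]{KunRodWal21}. Your Lyapunov computation is correct as algebra. The gap is the analytic ingredient you reduce everything to: the inequality $\norm{(\Id-P_{\clW_S})h}{H}\le\beta_S\norm{h}{V}$ with $\beta_S\to0$ is \emph{false} for these sensor spaces. Every element of $\clW_S$ vanishes on $\Omega\setminus\bigcup_j\omega_j^S$, hence
\[
\norm{(\Id-P_{\clW_S})h}{H}^2\ \ge\ \int_{\Omega\setminus\bigcup_j\omega_j^S}|h(x)|^2\,\rmd x ,
\]
and the uncovered set has a fixed positive fraction of $|\Omega|$ for \emph{every} $S$ --- this is precisely the content of the Remark you quote. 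Taking $h\equiv1$ (Neumann case; a fixed cutoff of $1$ for Dirichlet) shows $\beta_S\ge\beta_0>0$ uniformly in $S$, so no order of choosing $\lambda$ large and then $S$ large can make $\lambda\beta_S^2$ small. Your proposed repair via a local Poincar\'e inequality on a covered cell united with its uncovered neighbour does not help: that inequality bounds the distance of $h$ to its \emph{mean over the whole patch}, whereas $P_{\clW_S}h$ is constrained to vanish on the uncovered half, so for $h$ near a nonzero constant the residual there is of order $\norm{h}{H}$ regardless of how fine the grid is.

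What the cited theorem actually needs is weaker than $P_{\clW_S}\to\Id$. Running your own energy estimate backwards, exponential stability with rate $\mu_1$ follows once, for all $h\in V$,
\[
\bigl(\tfrac12 C_{\rm rc}^2+\mu_1\bigr)\norm{h}{H}^2\ \le\ \tfrac12\norm{h}{V}^2+\lambda\norm{P_{\clW_S}h}{H}^2 ,
\]
and this \emph{is} attainable, but by a different mechanism: split $h$ along the subspace $\clE_S$ spanned by the first eigenfunctions of $A$, absorb the high-frequency part through the spectral Poincar\'e inequality $\norm{(\Id-P_{\clE_S})h}{H}^2\le\alpha_{S+1}^{-1}\norm{h}{V}^2$ (with $\alpha_{S+1}$ the $(S{+}1)$-th eigenvalue of $A$) into the $\norm{h}{V}^2$ term, and control the low-frequency part by a uniform angle/direct-sum condition between $\clW_S$ and $\clE_S$, i.e.\ $\norm{P_{\clW_S}\theta}{H}\ge c\norm{\theta}{H}$ for $\theta\in\clE_S$ with $c$ independent of $S$. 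That uniform lower bound is the genuine geometric content of the chessboard layout established in \cite[Sect.~6]{Rod21-aut} and used in \cite[Thm.~3.1]{KunRodWal21}: the sensors need only ``see'' the low modes, not approximate the identity. As written, your argument hinges on a hypothesis that cannot be met, so the gap is essential rather than cosmetic; the rest of your write-up (the projection identity, the energy bookkeeping, and the final deduction of Assumption~\ref{A:sens}) is sound.
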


\begin{remark}\label{Zout-orthproj}
The product/composition of the operators~$\bfZ^{W_S}$ and~$\clZ$ in Lemma~\ref{L:detect} coincide with the orthogonal projection~$P_{\clW_S}=\bfZ^{W_S}\clZ=P_{\clW_S}$; see~\cite[Eq.~(1.19b)]{Rod21-aut}. Thus, the  injected forcing in Lemma~\ref{L:detect0} coincides with the scaled orthogonal projection of the estimate error onto the space spanned by the sensors, $-\lambda P_{\clW_{S}}\bfZ^{W_S}\clZ_S z=-\lambda P_{\clW_{S}}z$.
\end{remark}

\subsection{Stabilizability and state-penalization.} \label{sS:parab-stabil}
We show the satisfiability of Assumption~\ref{A:act}. The existence of a stabilizing feedback control input follows again again by~\cite[Thm.~3.1]{KunRodWal21} for actuators as in Fig.~\ref{fig:suppActSensGrid} with large enough~$M\in\{2m^d\mid m\in\bbN_+\}$ and with the input feedback operator~$\fkL=\fkK\coloneqq-\lambda (U_M^\diamond)^{-1}P_{\clU_M}$, where
$(U_M^\diamond)^{-1}$ is the inverse of the isomorphism~$U_M^\diamond\colon \bbR^M\to\clU_M$.
 That is, we have that the pair~$(-A-A_{\rm rc},U_M^\diamond)$ is stabilizable.

It remains to give examples of state-penalization operators~$Q$, to be taken in the cost functional, so that~$(-A-A_{\rm rc},Q)$ is detectable.

\subsubsection{The cases~$Q\in\{\Id,A^\frac12\}$ with~$H_Q=H$}\label{ssS:exQ-id}
In these cases the detectability of~$(-A-A_{\rm rc},Q)$ can be concluded by taking~$\fkL=-\lambda\Id$, with~$\lambda>0$ large enough.

\subsubsection{The case~$Q=\clZ$ with~$H_Q=\bbR^S$.}\label{ssS:exQ-sens} 
In applications it may be convenient to take~$Q$ with finite-dimensional range, for example, to speed  computations up. Of course, we can take~$Q=\clZ$ once we know that~$(-A-A_{\rm rc},\clZ)$ is detectable.

\subsubsection{The case~$Q= P_{\clE_{N_Q}^\rmf}$ and~$H_Q=\clE_{N_Q}^\rmf\subset H$}\label{ssS:exQ-eig}
Here~$\clE_{N_Q}^\rmf$ is  the space spanned by ``the'' first~${N_Q}$ eigenfunctions of~$A$. The detectability of~$(-A-A_{\rm rc},Q)$ can be concluded for large enough~$N_Q$, by taking~$\fkL=-\lambda\Id$, with~$\lambda>0$ large enough.

 \section{Numerical simulations}\label{S:simul}
 We present results of simulations concerning  the coupled system~\eqref{sys-coupled} in the concrete setting of  scalar parabolic equations as follows
\begin{align*}
 &\tfrac{\p}{\p t} y_{\rm orh} -(\nu\Delta-\Id) y_{\rm orh}+(a-1)y_{\rm orh} +b\cdot\nabla y_{\rm orh}
 =U_{M}^\diamond u_{\rm orh},\\
&\tfrac{\p}{\p t} \widehat y -(\nu\Delta-\Id) \widehat y+(a-1)\widehat y +b\cdot\nabla \widehat y
=U_{M}^\diamond u_{\rm orh}+\fkI_S(\clZ\widehat y-\clZ    y_{\rm orh}),\\
  &\fkT \widehat y\rest{\Gamma}=0=\fkT y_{\rm orh}\rest{\Gamma},\quad
(y_{\rm orh}(0),\widehat y(0))=(y_0,\widehat y_0).
       \end{align*}

 \subsection{Test data}\label{sS:simul-data}
In the simulations we took the unit square~$\Omega=(0,1)\times(0,1)$ in~$\bbR^2$ as  spatial domain. Further, we took
\begin{align}
&\nu=0.1;\qquad &&a(x,t)=-\tfrac12+x_1 -|\sin(6t+x_1)|_\bbR;\notag\\
&\fkT=\bfn\cdot\nabla;\qquad &&b(x,t)=\begin{bmatrix}x_1+x_2\\|\cos(6t)x_1x_2|_\bbR\end{bmatrix},\notag \end{align}
and, as initial conditions we have taken
\begin{align}\notag
y_{\rm orh}(0)=y_{\rm orh}(x,0)=y_0(x)=1-2\cos(\pi x_1);\notag\\
\widehat y(0)=\widehat y(x,0)=\widehat y_0(x)=W_S^\diamond[\clV_{S}]^{-1}\clZ_S y_0(x).\notag
\end{align}
That is, we propose to take the guess~$\widehat y(x,0)=P_{\clW_S} y_0(x)$, using the information of the output~$\clZ_Sy_0(x)$ availabe at time~$t=0$. Here~$[\clV_{S}]$ is the matrix as in~Lemma~\ref{L:detect0}.

Finally,  $S=8$~sensors and $M=8$~actuators were taken as the indicator functions of the subdomains in a chessboard pattern as illustrated in Fig.~\ref{fig:suppActSensGrid}. The locations are also shown in Fig.~\ref{fig:mesh}, \begin{figure}[htbp]%
    \centering%
    \subfigure[Reference mesh~$\clT_0$.\label{fig:mesh0}]
    {\includegraphics[width=.48\textwidth]{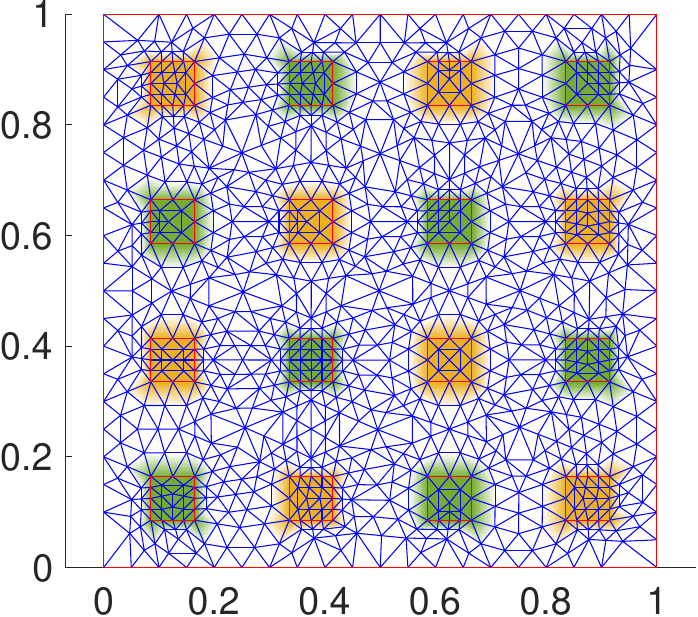}}\quad
    \subfigure[Regularly refined mesh~$\clT_1$.\label{fig:mesh1}]
         {\includegraphics[width=.48\textwidth]{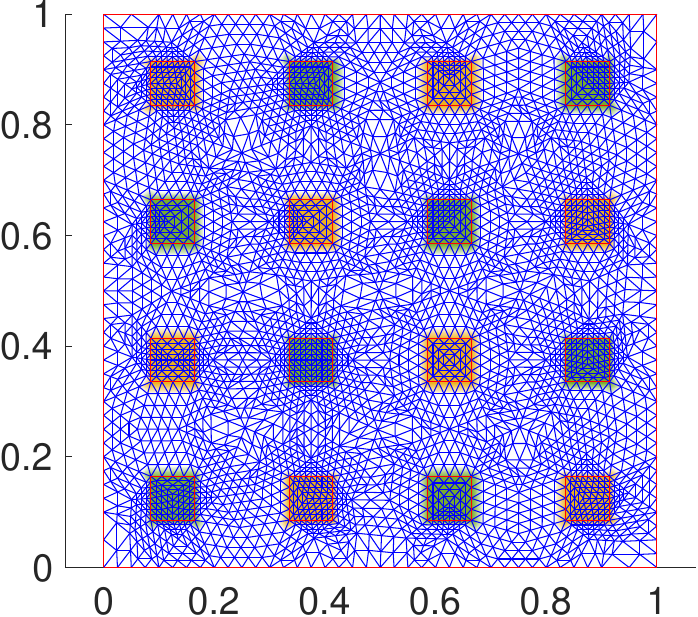}} 
         \caption{Spatial triangulations and locations of actuators and sensors.}%
     \label{fig:mesh}%
\end{figure}
together with the reference mesh/triangulation~$\clT_0$ used in the computations, shown in Fig.~\ref{fig:mesh0}. We show also the regular refinement~$\clT_1$ of the mesh~$\clT_0$ in Fig.~\ref{fig:mesh1}. These computations are based in a finite-element spatial discretization with piecewise-linear (hat) functions. In time a Crank--Nicolson discretization has been taken. The time-step was taken as~$t_0^{\rm step}=4\cdot10^{-4}$. Therefore, the simulations will be performed in the spatio-temporal discretization pairs as 
\begin{equation}\label{def.mesh}
\fkM_{\rm rf}\coloneqq\begin{cases}
(\clT_0,t_0^{\rm step}),&\mbox{for }{\rm rf}=0;\\
(\clT_1,\tfrac12 t_0^{\rm step}),&\mbox{for }{\rm rf}=1.
\end{cases}
\end{equation}

The state penalization in the cost functional~\eqref{cost-T} have been taken as
\begin{equation}\notag
Q=\sqrt{800}P_{\clE_{30}^\rmf}\in\clL(H,\clE_{30}^\rmf),
\end{equation}
where~$P_{\clE_{30}^\rmf}$ is the orthogonal projection operator onto the linear span~$\clE_{30}^\rmf\subset H$ of ``the'' first~$30$ eigenfunctions of the Neumann Laplacian. These eigenfunctions have been found through numerical computations.

We also look at the maximal squeezing factor~$\overline\theta$ obtained at concatenation times~$t_n$ given by Algorithm~\ref{alg:rhc+obs}, given by
\begin{equation}\notag
\overline\theta=\max\left\{\tfrac{\norm{y_T^*(t_{n+1})}{H}}{\norm{y_T^*(t_{n})}{H}}\mid t_n\in[0,T_\infty) \mbox{ is a concatenation time}\right\}.
\end{equation}
We stopped the Algorithm computations when the desired squeezing property
\begin{equation}\label{sqzOK}
\theta_n\coloneqq\tfrac{\norm{y_T^*(t_{n+1})}{H}}{\norm{y_T^*(t_{n})}{H}}<1
 \end{equation}
has been violated either in~$10$ consecutive  receding horizon  (RH) intervals~$\clI_n\coloneqq (t_{n},t_{n+1})$ or in~$50$ of such intervals in total.    These stopping criteria suggest the failure of the squeezing property for the RHC, which could stem either from the lack of theoretical stabilizability/detectability properties or simply from the accuracy of the computations of the optimal control problems, which are solved up to some chosen small tolerance value. Of course, the accuracy of computations is also subject to machine precision.

 \subsection{Solving the finite time-horizon  optimal control problems}\label{sS:simul-FTH-comput}
We solved the finite time-horizon open-loop optimal control problems iteratively down to a tolerance pair~${\rm Tol}=({\rm Tol1},{\rm Tol2})\in\bbR_+^2$, namely, down to the satisfiability of the conditions
\begin{subequations}\label{biTol}
\begin{align}
\max\left\{\norm{\delta^u_{k}}{L^2(t_n,t_n+T_{\rm rh})}^2,\norm{\delta^G_k}{L^2(t_n,t_n+T_{\rm rh})}^2\right\}&\le{\rm Tol1},\\
\norm{G(u_{k},p_{k})}{L^2(t_n,t_n+T_{\rm rh})}^2&\le{\rm Tol2},
\end{align}
\end{subequations}
where~$\delta^u_{k}$, $\delta^G_k$, and~$G$ stand for the differences as follows
\begin{align}
&\delta^u_{k}\coloneqq u_{k+1}^*-u_{k}^*,\qquad \delta^G_k\coloneqq G(u_{k+1}^*,p_{k+1})-G(u_{k}^*,p_{k}),\notag\\
&G(u,p)\coloneqq u-(U_M^\diamond)^*p.\notag
\end{align}
Here~$p_{k}$ is the iteration of the adjoint state solving
\begin{align}
 &\tfrac{\p}{\p t} p_{k} +(\nu\Delta-\Id) p_{k}-(a-1)p_{k} -b\cdot\nabla p_{k}
 =-Q^*Q y_{k}^*,\qquad&& p_{k}(t_n+T_{\rm rh})=0,
 \intertext{where the iterated optimal state solves}
 &\tfrac{\p}{\p t}y_{k}^*-(\nu\Delta-\Id)y_{k}^*+(a-1)y_{k}^* +b\cdot\nabla y_{k}^*
 =U_M^\diamond u_{k}^*,\qquad&& y_{k}^* (t_n)=\widehat y(t_n),
       \end{align}
and  both systems are solved under the considered homogeneous Neumann boundary conditions~$\bfn\cdot\nabla y_{k}^*\rest{\Gamma}=0=\bfn\cdot\nabla p_{k}\rest{\Gamma}$. That is, in the figures below~$y^*=y_{T_{\rm rh}}^*$ stands for the ``limit'' of such sequence~$y_{k}^*=y_{T_{\rm rh},k}^*$; for example, see Fig~\ref{fig:1stepOC}. Similarly, $u_{\rm orh}=u_{T_{\rm rh}}^*$ stands for the limit of such sequence~$u_{k}^*=u_{T_{\rm rh},k}^*$; for example, see Fig~\ref{fig:CL-Tmax4-tn}. The control input is updated as
\begin{align}
&u_{k+1}^*\coloneqq u_{k}^*-s^{\rm aBB}_{k}G(u^*_{k},p_{k}),\notag
\end{align}
where the~$s^{\rm aBB}_{k}$ are the  alternating Barzilai--Borwein   stepsizes  as in~\cite[Sect.~5]{AzmiKunisch20}.
The computations have been done up to the dynamic tolerance value~${\rm Tol}_n=({\rm Tol1}_n, {\rm Tol2}_n)$ as
\begin{subequations}\label{dynTol}
\begin{align}
{\rm Tol1}_n\coloneqq \max\{{\rm Tol_{\rm low}}(1),\min\{{\rm Tol_{\rm up}}(1),{\rm \widehat Tol_{n}}\}\},\\
{\rm Tol2}_n\coloneqq \max\{{\rm Tol_{\rm low}}(2),\min\{{\rm Tol_{\rm up}}(2),{\rm \widehat Tol_{n}}\}\},
\end{align}
where
\begin{align}
{\rm \widehat Tol_{n}}\coloneqq 10^{-2}\norm{\widehat y(t_n)}{H}^2(T_{\rm rh}+1)^{-1}.
\end{align}
\end{subequations}

For the reference minimal~${\rm Tol_{\rm low}}$ and maximal~${\rm Tol_{\rm up}}$ tolerance pairs we have taken
\begin{equation}\label{tolsmall}
{\rm Tol_{\rm low}}=(10^{-28},10^{-14})\quad\mbox{and}\quad {\rm Tol_{\rm up}}=(10^{-4},10^{-2}).
\end{equation}

 \subsection{Instability of the free dynamics and the output based optimal control}\label{sS:simul-6x6}
First of all we mention that the free/uncontrolled dynamics is unstable. This fact can be observed in Fig.~\ref{fig:free}, which shows the evolution of the norm of the state~$y_{\rm free}$ of the uncontrolled system
as follows
\begin{align}\label{sys-yfree} 
 &\tfrac{\p}{\p t} y_{\rm free} -(\nu\Delta-\Id) y_{\rm free}+(a-1)y_{\rm free} +b\cdot\nabla y_{\rm free}
 =0,\qquad 
y_{\rm free}(0)=y_0,
       \end{align}
 under the considered homogeneous Neumann boundary conditions~$\bfn\cdot\nabla y_{\rm free}\rest{\Gamma}=0$.
\begin{figure}[htbp]%
    \centering%
    \subfigure[Uncontrolled state.\label{fig:free}]
    {\includegraphics[width=.48\textwidth]{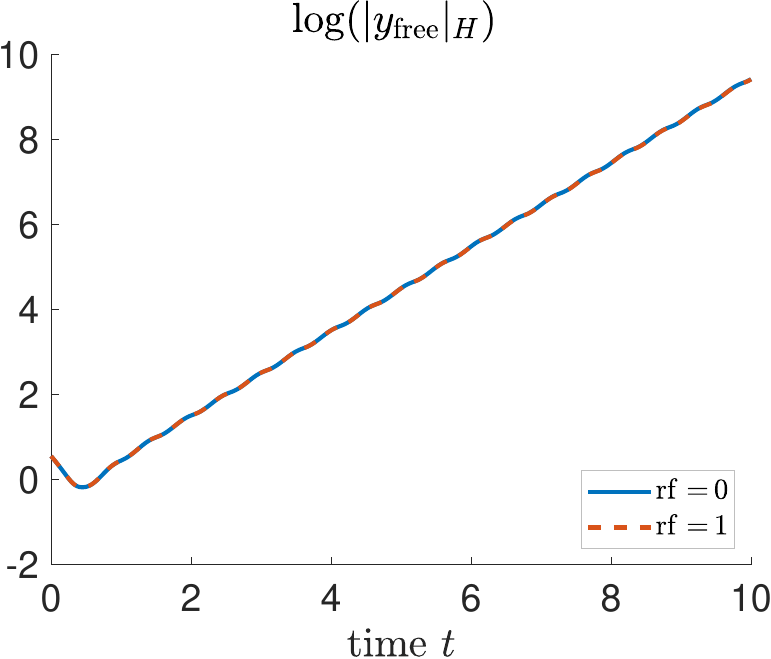}}\quad
    \subfigure[With estimated optimal control for $T_{\rm rh}=1$.\label{fig:1stepOC}]
         {\includegraphics[width=.48\textwidth]{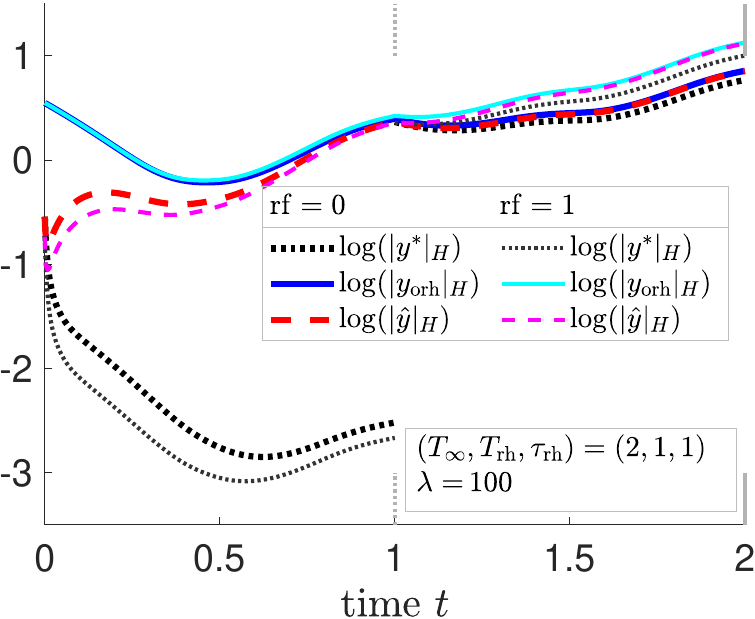}} 
         \caption{Free dynamics and estimate based optimal control.}%
     \label{fig:freeOC1}%
\end{figure}
Indeed, we can see that the norm of the state~$y_{\rm free}(t)$ of the uncontrolled system increases exponentially as time increases.

In Fig.~\ref{fig:free} we show the state computed in the reference mesh~$\fkM_0$ and in its refinement~$\fkM_1$; see~\eqref{def.mesh}. Visually, we note that the two solutions do coincide.
Next, in Fig.~\ref{fig:1stepOC}, we show  the solution corresponding to the guess/estimate based optimal control, again computed in the meshes~$\fkM_0$ and~$\fkM_1$. Now, visually, the solutions do not coincide, but the qualitative behavior is already captured by the solution computed in the coarsest mesh.
Since the computations in the coarsest mesh~$\fkM_0$ are considerably faster, hereafter, all the simulations correspond to the mesh~$\fkM_0$. 

 \subsection{ORHC stabilizing performance for several prediction horizon}\label{sS:simul-smallT_infty}
In this section, we consider the computational time~$T_\infty=4$ and the minimal sampling time~$\tau=\tau_{\rm rh}=0.1$. In Fig.~\ref{fig:ORHC-Tmax4-Trh}
\begin{figure}[htbp]%
    \centering%
      \subfigure[Prediction horizon $T_{\rm rh}=0.5$.\label{fig:ORHC-Tmax4-Trh0.5}]
    {\includegraphics[width=.48\textwidth]{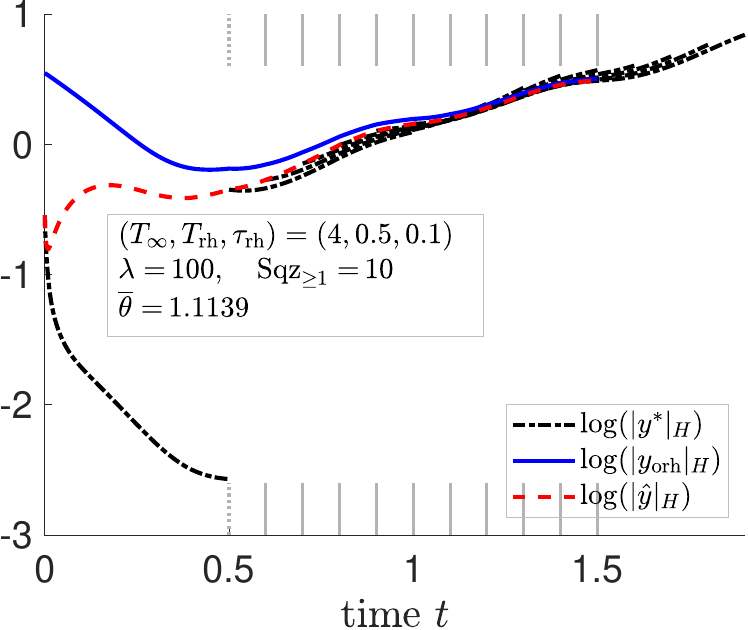}}\quad
     \subfigure[Prediction horizon $T_{\rm rh}=1$.\label{fig:ORHC-Tmax4-Trh1}]
    {\includegraphics[width=.48\textwidth]{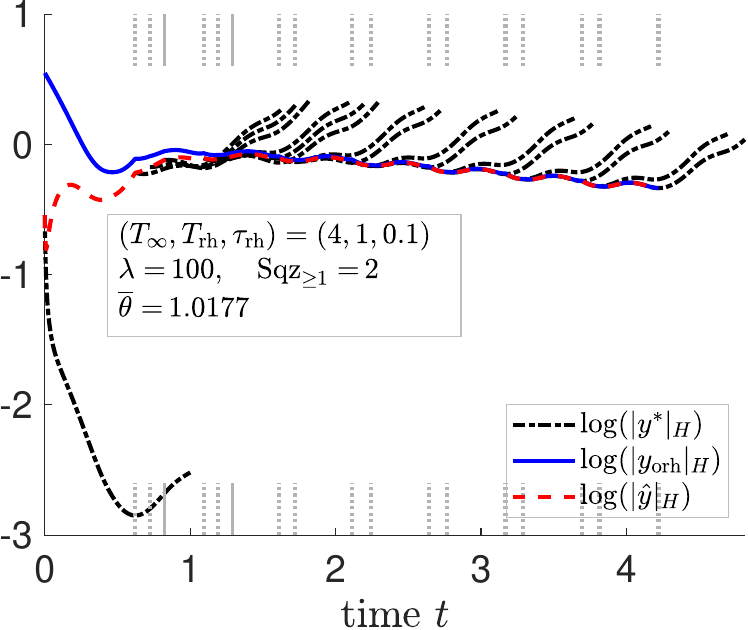}}\\
   \subfigure[Prediction horizon $T_{\rm rh}=1.5$.\label{fig:ORHC-Tmax4-Trh1.5}]
    {\includegraphics[width=.48\textwidth]{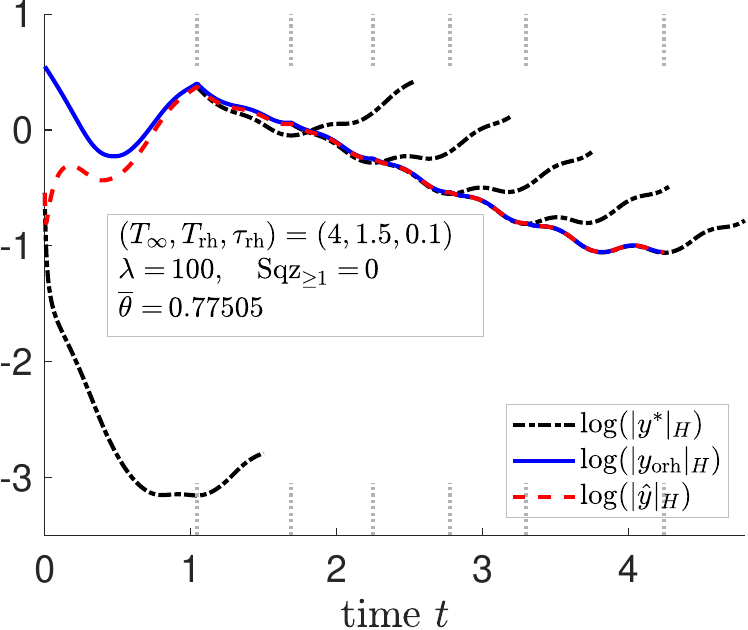}}\quad
    \subfigure[Prediction horizon $T_{\rm rh}=2$.\label{fig:ORHC-Tmax4-Trh2}]
    {\includegraphics[width=.48\textwidth]{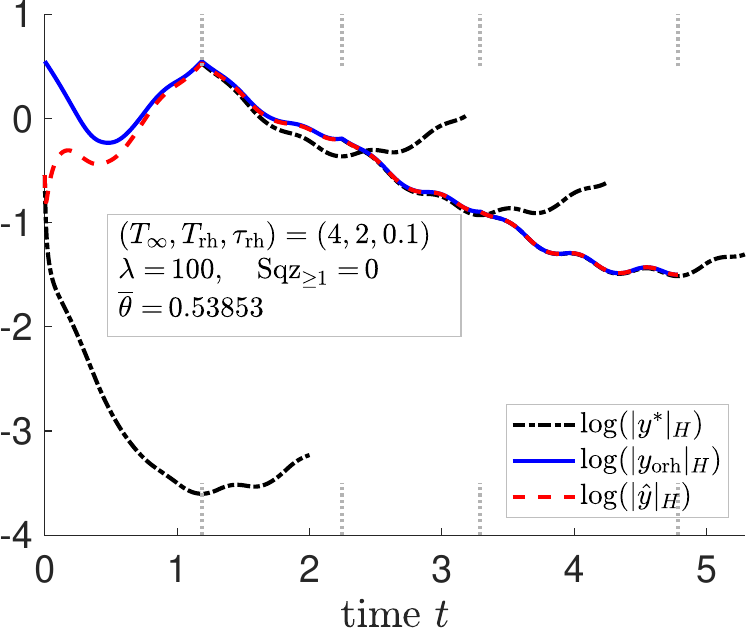}}\\
    \subfigure[Prediction horizon $T_{\rm rh}=2.5$.\label{fig:ORHC-Tmax4-Trh2.5}]
    {\includegraphics[width=.48\textwidth]{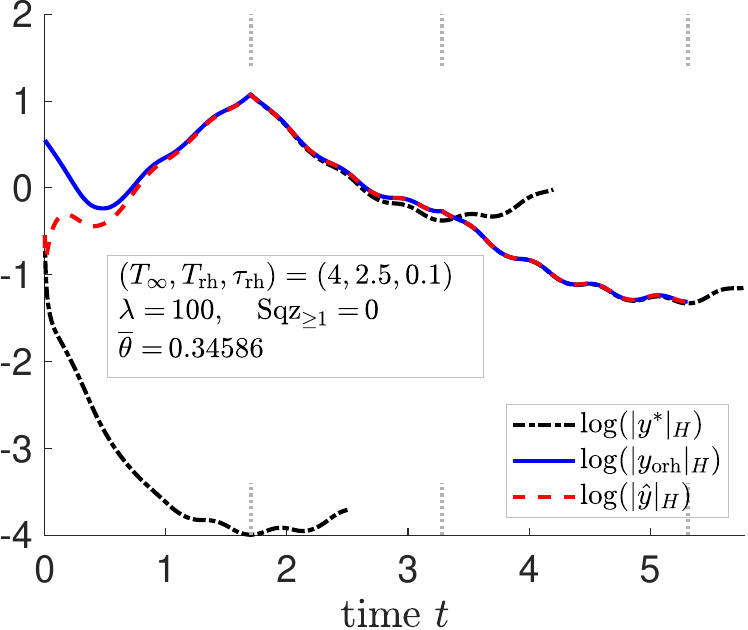}}\quad
       \subfigure[Prediction horizon $T_{\rm rh}=3$.\label{fig:ORHC-Tmax4-Trh3}]
    {\includegraphics[width=.48\textwidth]{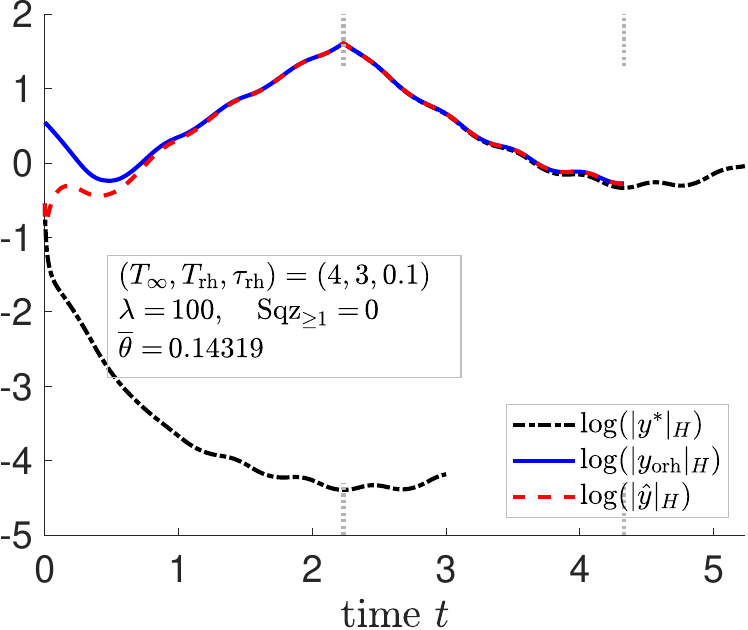}}
        \caption{ORHC performance for several prediction horizons~$T_{\rm rh}$.}%
     \label{fig:ORHC-Tmax4-Trh}%
\end{figure}
 we see the results obtained for several values taken for the prediction horizon~$T=T_{\rm rh}$. In the case~$T_{\rm rh}=0.5$ we can see, in~Fig.~\ref{fig:ORHC-Tmax4-Trh0.5},
that the norm of the controlled state~$y_{\rm orh}$ is increasing for large~$t$ which leads to the conclusion that the  ORHC is likely not stabilizing. This is also supported by the fact that the squeezing property~\eqref{sqzOK} has been violated  in the ten consecutive RH time intervals~$(t_n,t_{n+1})$, $1\le n\le 10$,   while being satisfied only in the  first interval~$(t_0,t_1)$. The vertical segment lines at the top and bottom of the figures are located at the concatenation time  instants~$t_{n+1}$, $n\ge0$; a dotted segment indicates that the squeezing property~\eqref{sqzOK} was satisfied in the  RH interval~$(t_n,t_{n+1})$ and a continuous segment indicates that the squeezing property~\eqref{sqzOK} was violated in~$(t_n,t_{n+1})$. In the figures, the total number of violations is denoted by
\begin{equation}\notag
{\rm Sqz_{\ge1}}\coloneqq \#\{t_n\mid \mbox{ the property~\eqref{sqzOK} is not satisfied}\}.
\end{equation}

In the case~$T_{\rm rh}=1$ the squeezing property~\eqref{sqzOK} has been violated only in two RH time intervals, namely~$(t_2,t_{3})$ and~$(t_5,t_{6})$ as we can see in~Fig.~\ref{fig:ORHC-Tmax4-Trh1}. The results also suggest that the ORHC is stabilizing with some (small) exponential rate. These results suggest that the squeezing property~\eqref{sqzOK} may be violated provided that it holds in subsequent time intervals. This point shall be shortly revisited in Sect.~\ref{sS:finrmks-deltaT}.

For larger~$T_{\rm rh}\in\{1.5,2,2.5,3\}$ the squeezing property~\eqref{sqzOK} held true in all the  RH time intervals~$(t_n,t_{n+1})$ as we can see in Figs.~\ref{fig:ORHC-Tmax4-Trh1.5}--\ref{fig:ORHC-Tmax4-Trh3}.

We see that for larger~$T_{\rm rh}$ we will obtain larger lengths~$t_{n+1}-t_n$ for the RH time intervals, thus we have to solve a smaller number of optimization problems. However, solving the optimal controls in larger time intervals is more time consuming. So, the choice of~$T_{\rm rh}$ is nontrivial and may depend on the concrete application: we know that~$T_{\rm rh}$ must be large enough (for a given suitable state penalization parameter~$Q$; see Lemma~\ref{L:optTsqueez}), but it may be important to choose~$T_{\rm rh}$ not too large so that, for example, we can perform the computations faster (e.g., in real time).

 \subsection{ORHC stabilizing performance for larger computation time}\label{sS:simul-largeTinfty}
 In this section, we consider the computational time~$T_\infty=100$. We fix~$(T_{\rm rh},\tau_{\rm rh})=(2,0.1)$. Since~$T_\infty$ is large we do not save the solution at all discrete times, but rather save it at the  key concatenation time instants~$t_n$ only. For illustration, we recompute firstly for the case~$T_\infty=4$ and show the results in Fig.~\ref{fig:ORHC-Tmax4-tn-lam100}, which correspond to the results in Fig.~\ref{fig:ORHC-Tmax4-Trh2}.
 In this section we also consider the case where the provided state estimate by the observer converges to the real (controlled) state at a smaller exponential rate. For this we simply decrease the observer gain down to~$\lambda=19$. The results are shown in Fig.~\ref{fig:ORHC-Tmax4-tn-lam19}.
 \begin{figure}[htbp]%
    \centering%
      \subfigure[Observer scalar gain $\lambda=100$.\label{fig:ORHC-Tmax4-tn-lam100}]
    {\includegraphics[width=.48\textwidth]{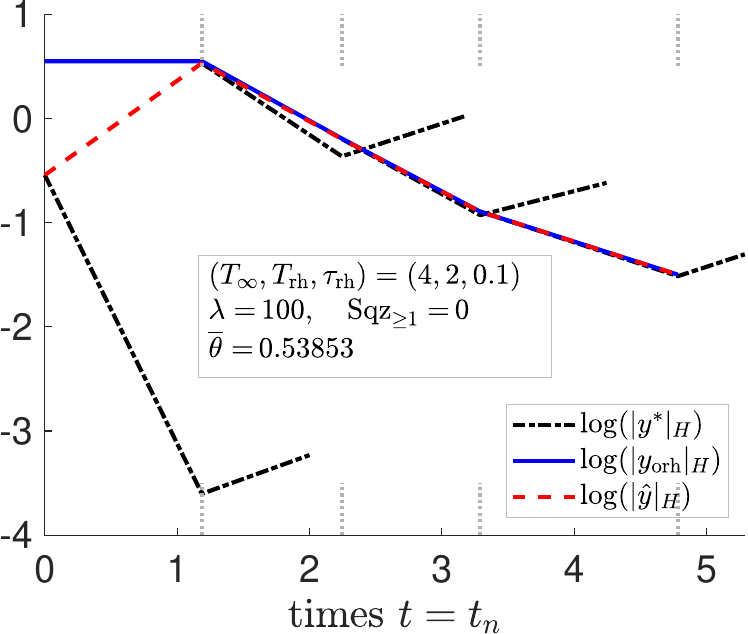}}\quad
     \subfigure[Observer scalar gain $\lambda=19$.\label{fig:ORHC-Tmax4-tn-lam19}]
    {\includegraphics[width=.48\textwidth]{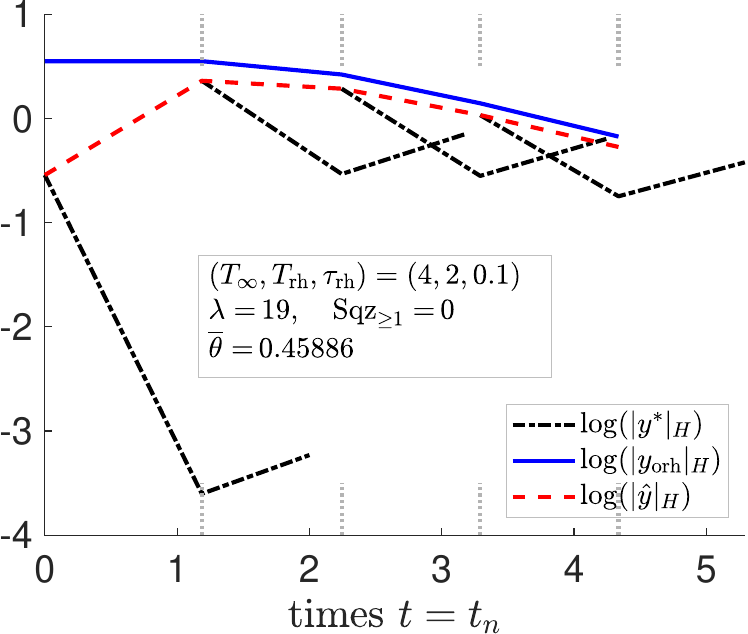}}
         \caption{ORHC performance (at times~$t_n$). $T_\infty=4$.}%
     \label{fig:ORHC-Tmax4-tn}%
\end{figure}

In order to better see the performance of the observer we plot the evolution of the norm of the state estimate error~$\widehat y-y_{\rm orh}$  in Fig.~\ref{fig:CL-Tmax4-tn}.
\begin{figure}[htbp]%
    \centering%
      \subfigure[Observer scalar gain $\lambda=100$.\label{fig:CL-Tmax4-tn-lam100}]
    {\includegraphics[width=.48\textwidth]{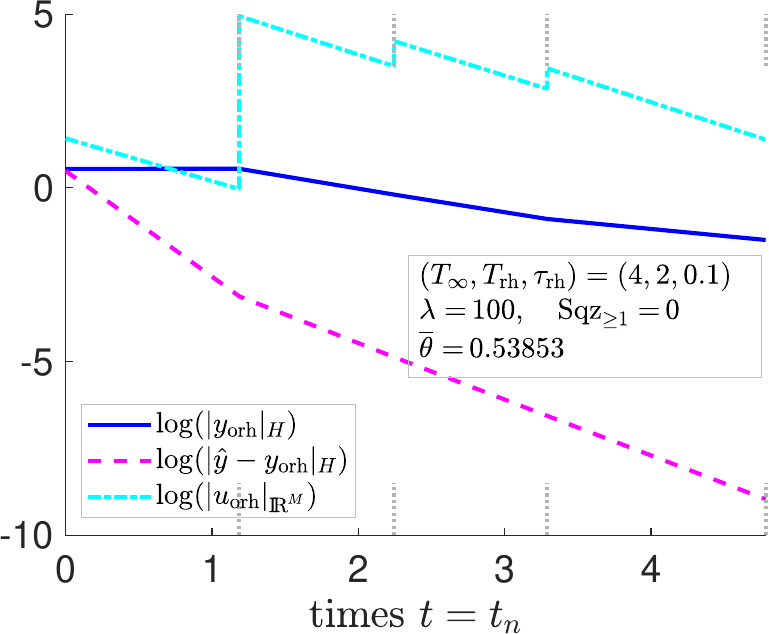}}\quad
     \subfigure[Observer scalar gain $\lambda=19$.\label{fig:CL-Tmax4-tn-lam19}]
    {\includegraphics[width=.48\textwidth]{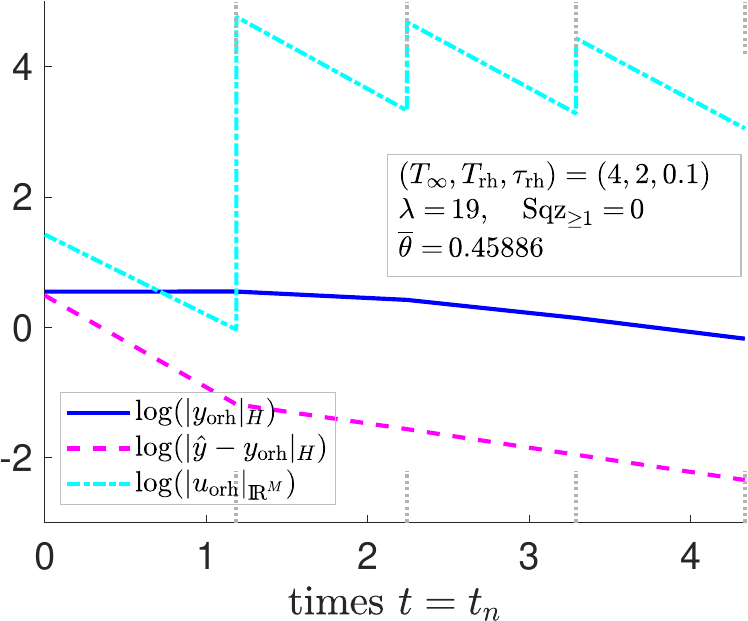}}
         \caption{ORHC and observer performance (at times~$t_n$). $T_\infty=4$.}%
     \label{fig:CL-Tmax4-tn}%
\end{figure}
In the same figure we also see the evolution of the norm of the  ORHC  input~$u_{\rm orh}$. In particular, we note that the control is discontinuous at the concatenation points; such discontinuities are expected (also in full-state--based RHC strategies; cf.~\cite[Fig.~5]{AzmiKunisch19}). To plot/underline these discontinuities we have saved the control input not only at the concatenation times~$t_n$,  but also at the preceding discrete times $t_{n+1}-t_0^{\rm step}$, $n\ge0$.

To have a more complete picture of the performance of the proposed strategy, we have run the simulations up to $T_\infty=100$. The results are shown in Fig.~\ref{fig:CL-Tmax100-tn}.
\begin{figure}[htbp]%
    \centering%
     \subfigure[Observer scalar gain $\lambda=100$.]
   {\includegraphics[width=.48\textwidth]{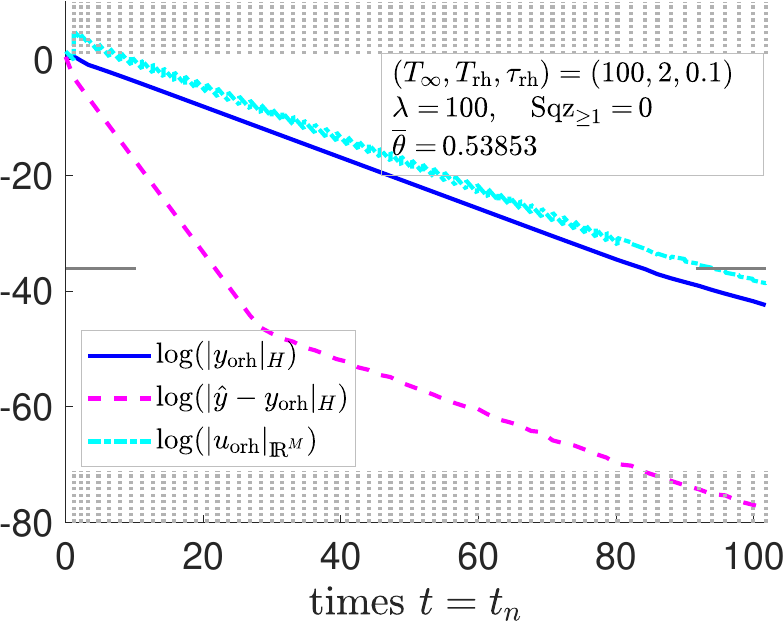}}\quad
     \subfigure[Observer scalar gain $\lambda=19$.]
    {\includegraphics[width=.48\textwidth]{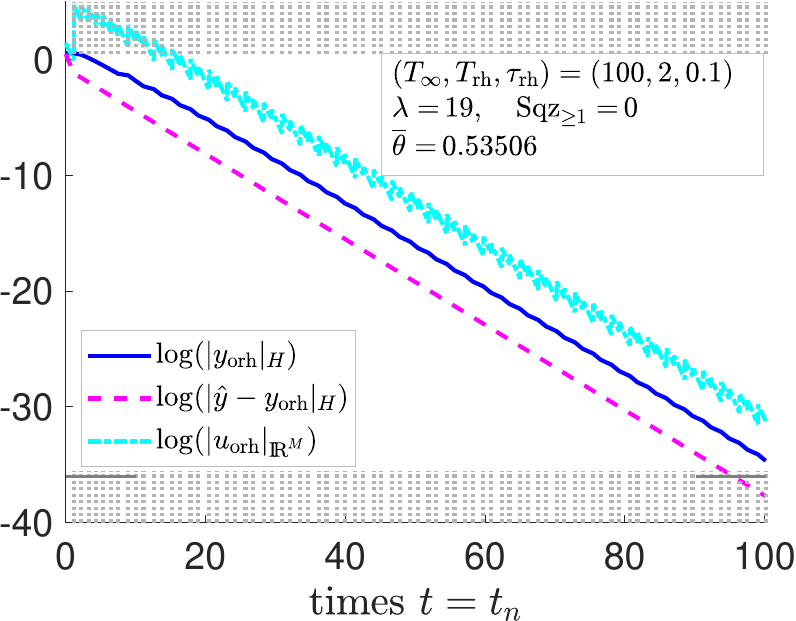}}
          \caption{ORHC  and observer performance (plot at times~$t_n$). $T_\infty=100$.}%
     \label{fig:CL-Tmax100-tn}%
\end{figure}
 We confirm that for~$\lambda=19$ the estimate~$\widehat y(t)$ provided by the observer converges to~$y_{\rm orh}(t)$ slower than for~$\lambda=100$. In either case, we confirm the stabilizing property of the  ORHC.

The computations have been performed in Matlab with machine precision~${\tt eps}\approx2.204\cdot 10^{-16}$ (see the horizontal line segments located at $\log({\tt eps})\approx-36.0437$ in the vertical axis).

We see that for time~$t>20$  the error estimate~$z(t)=\widehat y(t)-y_{\rm orh}(t)$ is below machine precision in the case~$\lambda=100$. Thus, in this case the observed  ORHC performance, for~$t>20$, is the one we would obtain with the corresponding full-state--based RHC (up to machine precision state estimation errors). Machine precision and associated round-off errors could be the reason why the detecting rate of the observer clearly deteriorates for time~$t>30$ in the case~$\lambda=100$.

In Fig.~\ref{fig:CL-Tmax100-tn-lam19100} 
\begin{figure}[htbp]%
    \centering%
   {\includegraphics[width=.48\textwidth]{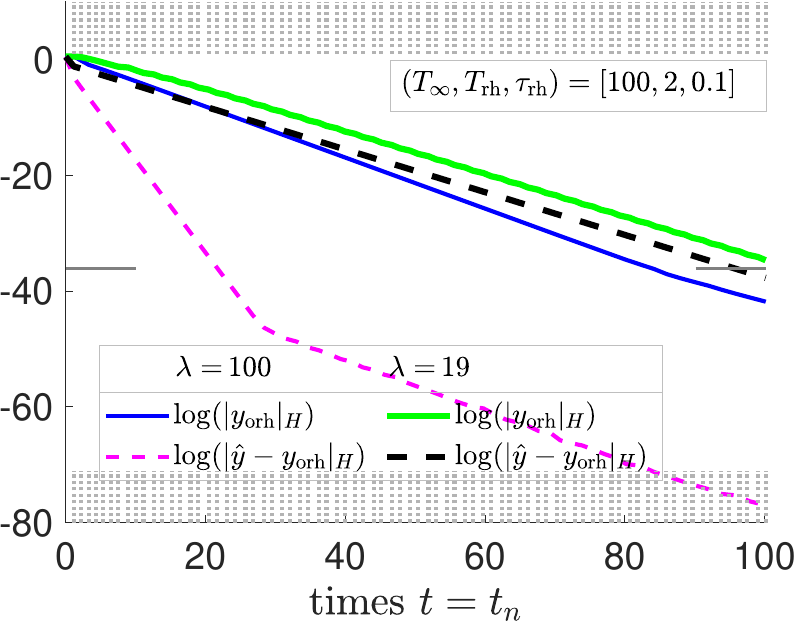}}\quad
          \caption{On the ORHC exponential decrease rates.}%
     \label{fig:CL-Tmax100-tn-lam19100}%
\end{figure}
we plot the corresponding controlled state states and error estimates together to better compare the decrease rates.
In the case~$\lambda=100$, where the observer is detecting/estimating with a relatively large exponential rate, we can see that the output-based  RHC provides a larger stabilization rate for the controlled state~$y_{\rm orh}$. In particular, we observe that such stabilization rate achieved for the controlled state is, in either case, not larger than the detection/estimation rate provided by the observer.

 \subsection{On the accuracy of the solution of the FTH optimal control problems}\label{sS:tolFTH}
Solving the FTH open-loop optimal control problems can be a time consuming numerical task. We discuss briefly the case where the FTH open-loop optimal control problems are solved iteratively, as in Sect.~\ref{sS:simul-FTH-comput}, down to a larger tolerance, thus, with less accuracy. 
The previous simulations have been performed with the minimal tolerance pair~${\rm Tol_{\rm low}}=(10^{-28},10^{-14})$ as in~\eqref{tolsmall}. Now, we take the same maximal tolerance pair~${\rm Tol_{\rm up}}$ as in~\eqref{tolsmall}, but we take a larger minimal one as~${\rm Tol_{\rm low}}=(10^{-8},10^{-4})$. This leads to the 
analogue of Fig.~\ref{fig:CL-Tmax100-tn} shown in Fig.~\ref{fig:CL-Tmax100-tn-tollarge}.
\begin{figure}[htbp]%
    \centering%
     \subfigure[Observer scalar gain $\lambda=100$.\label{fig:CL-Tmax100-tn-tollarge-lam100}]
   {\includegraphics[width=.48\textwidth]{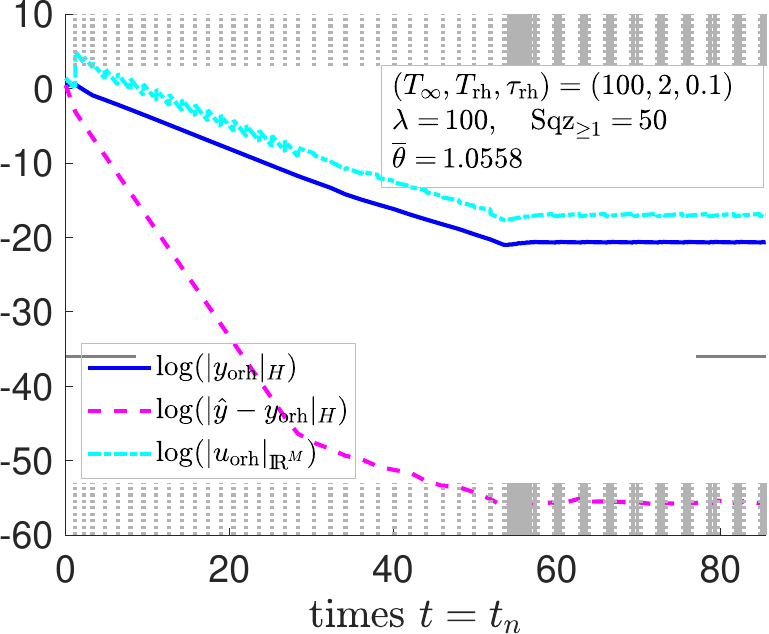}}\quad
     \subfigure[Observer scalar gain $\lambda=19$.\label{fig:CL-Tmax100-tn-tollarge-lam30}]
    {\includegraphics[width=.48\textwidth]{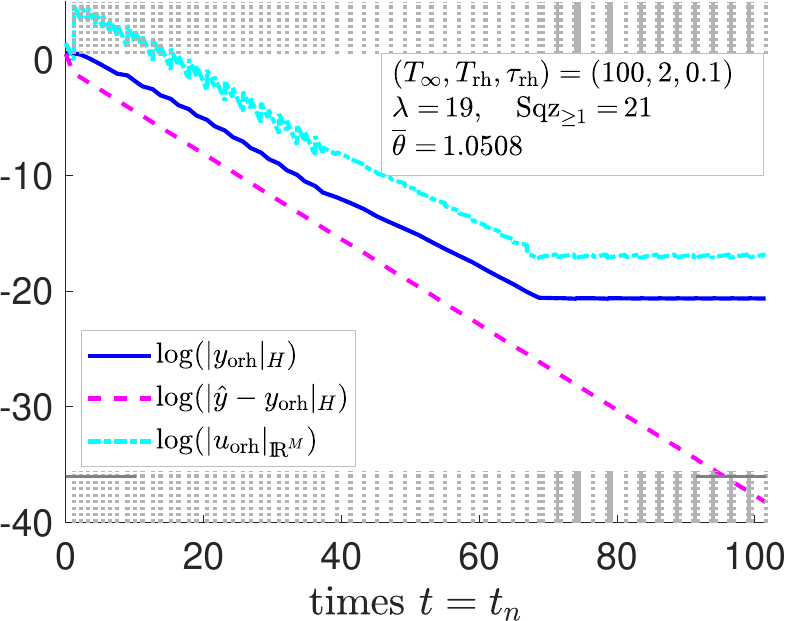}}
          \caption{Behavior for larger minimal tolerance pair~${\rm Tol_{\rm low}}=(10^{-8},10^{-4})$.}%
     \label{fig:CL-Tmax100-tn-tollarge}%
\end{figure}
That is, by solving the FTH  optimal control problems with less accuracy, we reach a stagnation-like behavior for the norms as shown in Fig.~\ref{fig:CL-Tmax100-tn-tollarge}. This is natural and expected if, for example, we look at the associated larger numerical errors as small (but, still relevant)  perturbations of the dynamics. Recall that, as mentioned in Sect.~\ref{sS:simul-data}, we have stopped the computations when the squeezing property
in~\eqref{sqzOK} 
has been violated either in~$10$ consecutive RH intervals~$\clI_n\coloneqq (t_{n},t_{n+1})$ or in a total of~$50$ intervals, the latter is the reason why the simulations were stopped before~$T_\infty=100$ in Fig.~\ref{fig:CL-Tmax100-tn-tollarge-lam100}. We see also that the stagnation behavior of the norms starts when the state norm reaches the value~$\rme^{-20}$ approximately, in both cases. This fact can be seen as a motivation to take a dynamic tolerance as in~\eqref{dynTol}, depending on the norm of the (available) state estimate~$\widehat y(t_n)$.

 \section{Conclusions}\label{S:finalremks}
We have shown the stability of the closed-loop system coupling a RHC framework with a Luenberger observer in the context of parabolic-like equations and we have presented and discussed results of corresponding simulations. The stability is guaranteed for appropriate sets of actuators and sensors, namely, for appropriate stabilizability and detectability properties. The approach depends on a triple~$(Q,\tau,T)$ at our disposal, which we can use to tune the proposed Algorithm~\ref{alg:rhc+obs}. Stability holds for a prediction horizon~$T$ large enough and for appropriate detectability properties of the state-penatization operator~$Q$.

\subsection{On the state-penalization operator~$Q$}\label{sS:finrmks-Q}
In the literature, the operator~$Q$, penalizing the state in the cost functional, is sometimes called ``observation'' operator; see~\cite[Part~I, Ch.~1, Sect.~3, Eqs.~(3.1) and~(3.3)]{BensoussanDaPratoDelfMitt07}. In this manuscript it is called ``state-penalization'' operator. This is done to avoid any possible confusion with the ``observer'' providing us with a state estimate. That is, the operator~$Q$ is simply seen as a tuning ``parameter'' in the cost functional; see~\cite[Sect.1.3.1]{RawlingsMayneDiehl19}. 
In other words, the choice of the operator~$Q$ is at our disposal, independently of the sets of sensors and actuators. It can be taken, for example, as one of those operators in Sects.~\ref{ssS:exQ-id}, \ref{ssS:exQ-sens}, or~\ref{ssS:exQ-eig}.
Concerning future work, it would be interesting to
 investigate how the tuning/choice of~$Q$ influences the performance of Algorithm~\ref{alg:rhc+obs} towards, for example, either maximizing the provided stabilization rate  or reducing the computation time (e.g., by guaranteeing a faster computation of the optimal FTH controls or by minimizing the value of the suitable  prediction horizon~$T$).
 
\subsection{On the ORHC pair~$(\tau,T)$}\label{sS:finrmks-deltaT}
The prediction horizon~$T$ must be large enough so that the concatenation time instants~$t_n$, computed online, can be selected so that the norm of the optimal state at time~$t_n$ is squeezed in comparison with the norm at initial time~$ {t_{\rm in}} =t_{n-1}$; see Algorithm~\ref{alg:rhc+obs} and Lemma~\ref{L:optTsqueez}.
The available literature, on RHC, considers mostly the case where the concatenation time instants are taken simply as~$n\tau$, with~$ \tau$ (sampling time)  chosen apriori (for larger~$T=T(\tau)$, if necessary).
It would be interesting to know whether we can take such concatenation steps in Algorithm~\ref{alg:rhc+obs}.
This is a nontrivial question. Further, it is not clear whether we can fix~$\tau$ arbitrarily, for example,   as in the results of~\cite{AzmiKunisch19,KunPfeiffer20}, which apply to the particular case where it is assumed that we have access to the entire state.  We also note the following: in~\cite[Thm.~2.6 and Rem.~2.7]{AzmiKunisch19} (addressing linear nonautonomous dynamics) the value of~$\tau$ is taken arbitrarily, but  in~\cite[Thm.~2.4 and Rem.~2.5]{KunPfeiffer20} (addressing nonlinear autonomous dynamics) this value is taken large enough, as~$\tau\ge\tau_0$ for some~$\tau_0>0$.

We used (asked for) the squeezing property~\eqref{sqzOK} in every  RH time interval~$(t_n,t_{n+1})$. In the infinite time-horizon setting~$T_\infty=\infty$, it is clear that~\eqref{sqzOK} may be violated in a finite number of such time intervals. Of course, it may also be violated in a countable number of such intervals, provided these violations are compensated by the squeezing properties in the remaining time intervals. It could be interesting to investigate and quantify the set of intervals where~\eqref{sqzOK} may be violated, but the exponential stability is still guaranteed for the proposed  ORHC  strategy.

\subsection{Robustness}\label{sS:finrmks-robust}
The theoretical and numerical investigation of   the robustness properties of the strategy,  specifically,  against sensor measurement errors is an interesting subject for future work due to the ubiquitous presence of such errors in real-world applications.  Other types of disturbances could also be investigated,   such as those arising from model uncertainties or noisy  external forces.

\medskip\noindent
{\bf Aknowlegments.}
S. Rodrigues gratefully acknowledges partial support from
the State of Upper Austria and the Austrian Science
Fund (FWF): P 33432-NBL.

\renewcommand*{\bibfont}{\normalfont\small}
{
\small
\bibliographystyle{plainurl}
\bibliography{RHC_Obs}

}

\end{document}